\newtheorem{formula}{}[section]
\newtheorem{proposition}[formula]{Proposition}
\newtheorem{corollary}[formula]{Corollary}
\newtheorem{lemma}[formula]{Lemma}
\newtheorem{theorem}[formula]{Theorem}
\theoremstyle{definition}
\newtheorem{definition}[formula]{Definition}
\newtheorem{example}[formula]{Example}
\theoremstyle{remark}
\newtheorem*{remark}{Remark}
\begin{document}
\title[Complex structures on nilpotent Lie algebras]{Complex structures on nilpotent Lie algebras
and descending central series}
\author[Dmitri V. Millionshchikov]{Dmitri V. Millionshchikov}
\thanks{Partially supported by
the Russian Foundation for Fundamental Research, grant no. 14-01-00537.}
\subjclass{Primary 17B30; Secondary 32G05, 53C30}
\keywords{Invariant complex structure, nilpotent Lie algebra, nilmanifold,
nil-index}
\address{Department of Mathematics and Mechanics, Moscow
State University, 119992 Moscow, RUSSIA}
\email{million@mech.math.msu.su}

\begin{abstract}
We study the algebraic constraints on the structure of nilpotent Lie algebra $\mathbb{g}$, 
which arise because of the presence of an integrable complex structure $J$. Particular attention is paid to non-abelian complex structures. Constructed various examples of positive graded Lie algebras with complex structures, in particular, we construct an infinite family $\mathfrak{D}(n)$ of such algebras that we have for their nil-index $s(\mathfrak{D}(n))$:
$$
s(\mathfrak{D}(n))=\left[ \frac{2}{3}\dim{\mathfrak{D}(n)} \right].
$$
\end{abstract}

\date{}

\maketitle

\section*{Introduction}
%In the present article we are going to study left-invariant
%complex structures on real nilpotent Lie groups and nilmanifolds.
%We recall that nilmanifolds $M=G/\Gamma$ (compact homogeneous
%spaces of nilpotent Lie groups $G$ over lattices $\Gamma$) is an
%interesting class of manifolds that do not admit K\"ahler
%structure \cite{BG}.

The Newlander-Nirenberg theorem \cite{NN} implies that a
left-invariant complex structure on a real simply connected Lie
group $G$ can be defined as an almost-complex structure $J$ on the
tangent Lie algebra $\mathfrak{g}$ of $G$ ($J$ is a linear
endomorphism of $\mathfrak{g}$ such that $J^2=-1$) satisfying {\it
the integrability condition} (\ref{Nijenhuis}) (vanishing of the
Nijenhuis tensor):
$$
[JX,JY]=[X,Y]+J[JX,Y]+J[X,JY], \; \forall X,Y \in \mathfrak{g}.
$$
Extending an almost complex structure $J$ on the complexification
$\mathfrak{g}^{\mathbb C}$ it is easy to see that the
integrability condition is equivalent to the following one:
the eigen-spaces $\mathfrak{g}^{\mathbb C}_{\pm i}$ of $J$
corresponding to the eigen-values $\pm i$ are
(complex) subalgebras of $\mathfrak{g}^{\mathbb C}$.
If they are abelian subalgebras then the complex structure $J$
is called {\it abelian}. It was proved in \cite{DF2} that a Lie group
$G$ admitting a left-invariant abelian complex structure has to be
solvable. On the another hand an abelian complex structure
is nilpotent \cite{Sal}. The study of nilpotent complex
structures on nilmanifolds (nilpotent Lie algebras)
was the subject in \cite{CFGU}.
The properties of nilmanifolds with abelian complex structures is much more studied than the general case. For instance, the Dolbeault cohomology of a nilmanifold with an integrable abelian complex structure can be expressed in terms of the corresponding Lie algebra cohomology (\cite{CF}, \cite{CFGU}).

Existing finite list of all real $6$-dimensional nilpotent algebras up to isomorphism \cite{Moros} allowed S.Salamon in \cite{Sal} distinguish among them algebras admitting integrable complex structures, spend their classification from this point of view. In his cassification 
there are examples of  Lie algebras
that admit only non-abelian complex structures (as well as
examples of Lie algebras that does not admit any complex
structure). This approach does not work in the following even dimension $8$, where, on the one hand, such a classification does not exist, on the other - there are infinite families of pairwise nonisomorphic nilpotent algebras.

Another way is to find a priori algebraic constraints expressed in terms of the nil-index, the dimensions of the ideals of descending central series, the first Betti number, etc., which narrow the range of possible candidates to possess an integrable complex (or hypercomplex) structure.
This approach has been implemented in \cite{DF1}, \cite{DF2}, where the authors managed to classify $8$-dimensional real nilpotent algebras admitting hypercomplex structure, despite the lack of a general classification.

As an example of an algebraic constraints, which we discussed above, we can cite the following general result \cite{GzR} that a filiform Lie algebra $\mathfrak{g}$ (i.e. a nilpotent Lie algebra with 
the value of nilindex $s(\mathfrak{g})=\dim{\mathfrak{g}}-1$)
does not admit any integrable complex structure. 
In this article we prove the estimate (\ref{no_filiform})
$$
\dim{\mathfrak{g}} - \dim{\mathfrak{g}^4}=\dim{\mathfrak{g}} - \dim{\left[\mathfrak{g}, \left[\mathfrak{g}, [\mathfrak{g},\mathfrak{g}]\right]\right]}
\ge 5.
$$
From this estimate follows easily \cite{GzR}.

The main purpose of this artcile is to study nilpotent algebras admitting (non-abelian) complex structures in high (arbitrary) dimensions. This requires a stock of examples of such algebras. As suitable examples we propose to study positively graded Lie algebras. On the one hand it is more easy to study them (cohomology computations) on the other hand they have quite interesting properties. In addition, any nilpotent algebra can be obtained as a special deformation of a positively graded Lie algebra.

The Theorem \ref{main_theorem} claims that for the maximal value $s(2n)$ of nil-index $s(\mathfrak{g})$ of $2n$-dimensional nilpotent Lie algebras 
$\mathfrak{g}$ admiting a complex structure  we have the following estimates (\ref{main_estimate}):
$$
\left[\frac{4n}{3} \right] \le s(2n) \le 2n-2.
$$
It follows from \cite{Sal} that $s(6)=4$. It appears possible to prove that $s(8)=5$  and improve the estimates (\ref{main_estimate}) for higher dimensions.

\section{Nilpotent Lie algebras}

The sequence of ideals of a Lie algebra $\mathfrak{g}$
$$\mathfrak{g}^1=\mathfrak{g} \; \supset \;
\mathfrak{g}^2=[\mathfrak{g},\mathfrak{g}] \; \supset \; \dots
\; \supset \;
\mathfrak{g}^k=[\mathfrak{g}, \mathfrak{g}^{k-1}] \; \supset
\; \dots$$
is called the descending central sequence of $\mathfrak{g}$.

A Lie algebra $\mathfrak{g}$ is called nilpotent if
there exists a natural number $s(\mathfrak{g})$ such that:
$$\mathfrak{g}^{s(\mathfrak{g})+1}=[\mathfrak{g},
\mathfrak{g}^{s(\mathfrak{g})}]=0,
\quad \mathfrak{g}^{s(\mathfrak{g})} \: \ne 0.$$
$s(\mathfrak{g})$ is called the
nil-index of the nilpotent Lie algebra $\mathfrak{g}$ and
$\mathfrak{g}$ is called $s(\mathfrak{g})$-step nilpotent Lie algebra.
Thus one can regard an abelian Lie algebra as $1$-step
nilpotent.
\begin{example}
\label{heisenberg}
The Heisenberg algebra $\mathfrak{h}_{2k{+}1}$ is defined
by its basis $x_1, y_1, \dots, x_k, y_k, z$
and the commutating relations:
$$
[x_i,y_i]=z, \; i=1, \dots, k.
$$
\begin{remark}
In the sequel we will omit trivial relations $[e_i,e_j]=0$
in the definitions of Lie algebras.
\end{remark}
The Heisenberg Lie algebra $\mathfrak{h}_{2k{+}1}$
is $2$-step nilpotent.
\end{example}

One can consider the sequence
$$a(\mathfrak{g})=\left( a_1(\mathfrak{g}), \dots,
a_{s(\mathfrak{g})}(\mathfrak{g}) \right),$$
where
$a_i(\mathfrak{g})=\dim{\mathfrak{g}^i/\mathfrak{g}^{i{+}1}}$.

We have the following obvious estimates on $a_i(\mathfrak{g})$:
$$
a_1(\mathfrak{g})=\dim (\mathfrak{g}/[\mathfrak{g},\mathfrak{g}])
\: \ge \: 2, \; \; \;
a_i(\mathfrak{g}) \ge \: 1, \; i=2,\dots, s(\mathfrak{g}).
$$

It immediately follows, that we have the following estimate:
$$s(\mathfrak{g}) \le \dim{\mathfrak{g}}-1.$$

\begin{definition}
A finite dimensional nilpotent Lie algebra $\mathfrak{g}$ is  called
filiform if $s(\mathfrak{g}) = \dim{\mathfrak{g}}-1$.
\end{definition}
One can remark that in other words a Lie
algebra $\mathfrak{g}$ is filiform iff
$$a(\mathfrak{g})=(2,1,1,\dots,1).$$

\begin{example}
The Lie algebra $\mathfrak{m}_0(n)$ that is defined
by its basis $e_1, e_2, \dots, e_n$
and the commutating relations:
$$ [e_1,e_i]=e_{i+1}, \; i=2, \dots, n-1$$
is obviously an example of a filiform Lie algebra.
\end{example}

Let us consider the cochain complex of a Lie algebra $\mathfrak{g}$ with
$\dim \mathfrak{g}=n$:
$$
\begin{CD}
\mathbb K @>{d_0{=}0}>> \mathfrak{g}^* @>{d_1}>> \Lambda^2 (\mathfrak{g}^*) @>{d_2}>>
\dots @>{d_{n-1}}>>\Lambda^{n} (\mathfrak{g}^*) @>>> 0
\end{CD}
$$

where $d_1: \mathfrak{g}^* \rightarrow \Lambda^2 (\mathfrak{g}^*)$
is the dual mapping to the Lie bracket
$[ \, , ]: \Lambda^2 \mathfrak{g} \to \mathfrak{g}$,
and the differential $d$ (that is a collection of $d_p$)
is a derivation of the exterior algebra $\Lambda^*(\mathfrak{g}^*)$
that continues $d_1$:

$$
d(\rho \wedge \eta)=d\rho \wedge \eta+(-1)^{deg\rho} \rho \wedge d\eta,
\; \forall \rho, \eta \in \Lambda^{*} (\mathfrak{g}^*).
$$
The condition $d^2=0$ is equivalent to the Jacobi identity in $\mathfrak{g}$.

The cohomology of $(\Lambda^{*} (\mathfrak{g}^*), d)$ is
called the cohomology (with trivial coefficients) of the Lie algebra
$\mathfrak{g}$ and is denoted by $H^*(\mathfrak{g})$. One can easily remark that
$H^1(\mathfrak{g})$ is
isomorphic to $\left( \mathfrak{g}/[\mathfrak{g}, \mathfrak{g}] \right)^*$.

Let us define a family $\left\{V_l\mathfrak{g}^*\right\}$ of subspaces in $\mathfrak{g}^*$:

1) $V_0\mathfrak{g}^*=\{0\}$, 

2) $V_1\mathfrak{g}^*={\rm Ker} d_1$,
$d_1f(X,Y)=f([X,Y])$,

3) $V_l\mathfrak{g}^*=\{f \in \mathfrak{g}^*: d_1f \in
\Lambda^2(V_{l{-}1}\mathfrak{g}^*)\}, \; l \ge 2$.
$$
\{0\} \subset V_1\mathfrak{g}^* \subset \dots \subset V_l\mathfrak{g}^* \subset V_{l{+}1}\mathfrak{g}^* \subset
\dots
$$

The first subspace $V_1\mathfrak{g}^*$ is the annihilator
of $\mathfrak{g}^2{=}[\mathfrak{g},\mathfrak{g}]$ and  it is
isomorphic to the first cohomology $H^1(\mathfrak{g})$.
Supposing by induction that $V_{l{-}1}\mathfrak{g}^*$
annihilates $\mathfrak{g}^{l}$
one can remark that $d_1f \in \Lambda^2(V_{l}\mathfrak{g}^*)$ iff
$d_1f(X,Y)=f([X,Y])$ vanishes for all
$X \in \mathfrak{g}$ and $Y \in \mathfrak{g}^{l}$
($f$ annihilates the subspace $\mathfrak{g}^l$).
Hence $V_l\mathfrak{g}^*$ is the annihilator of $\mathfrak{g}^{l{+}1}$.
Also we have
$$a_l(\mathfrak{g})=\dim{\mathfrak{g}^l/\mathfrak{g}^{l{+}1}}=
\dim{V_{l{+}1}\mathfrak{g}^*/V_{l}\mathfrak{g}^*}.$$
Now the nilpotency
condition for a Lie algebra $\mathfrak{g}$ can be interpreted
in a following way:
a non-abelian $\mathfrak{g}$ is $s$-step
nilpotent iff there exists a positive integer $s$ such that $V_{s}\mathfrak{g}^*=\mathfrak{g}^*$ and
$V_{s{-}1}\mathfrak{g}^* \neq \mathfrak{g}^*$.

\section{Integrability condition}

\begin{definition}
An almost-complex structure $J$ on a Lie
algebra $\mathfrak{g}$ (i.e. $J$ is a linear endomorphism
of $\mathfrak{g}$ such that $J^2=-1$)
satisfying the integrability condition
\begin{equation}
\label{Nijenhuis}
[JX,JY]=[X,Y]+J[JX,Y]+J[X,JY], \; \forall X,Y \in \mathfrak{g}
\end{equation}
is called a complex structure on $\mathfrak{g}$.
\end{definition}

\begin{example}
Let us consider the direct sum $\mathfrak{h}_{2k{+}1}\oplus {\mathbb R}$,
where one-dimensional abelian ${\mathbb R}$ is spanned by $w$.
One can define an operator
$J$ on $\mathfrak{h}_{2k{+}1}\oplus {\mathbb R}$:
$$Jx_i=y_i, Jy_i=-x_i \; i=1, \dots, k; \;\;Jz=w,\; Jw=-z.$$
$J^2=-1$ and $J$ satisfies the Nijenhuis condition.
In fact $J$ satisfies to an identity even stronger than
(\ref{Nijenhuis}).
\end{example}

\begin{definition}
An almost complex structure $J$ on a Lie algebra $\mathfrak{g}$
is said to be an abelian complex structure iff
\begin{equation}
\label{abelian}
[JX,JY]=[X,Y], \; \forall X,Y \in \mathfrak{g}.
\end{equation}
\end{definition}
Obviously if an almost complex structure satisfies
(\ref{abelian}) it satisfies the Nijenhuis condition (\ref{Nijenhuis}).
$J$ from Example \ref{heisenberg}
is an abelian complex structure. It was proved in \cite{DF2} that
a real Lie algebra admitting an abelian complex structure
has to be solvable.

Extending an almost complex structure $J$ on the complexification
$\mathfrak{g}^{\mathbb C}$ we have a splitting
$$\mathfrak{g}^{\mathbb C}=\mathfrak{g}^{\mathbb C}_{{-}i}
\oplus \mathfrak{g}^{\mathbb C}_{i},$$
where $\mathfrak{g}^{\mathbb C}_{{\pm }i}=\{x-\pm iJx: x \in \mathfrak{g}\}$
are the eigen-space of the complexification of $J$ corresponding to the
eigen-values $\pm i$. It is easy to see that:

1) $J$ is integrable iff
both $\mathfrak{g}^{\mathbb C}_{{\pm }i}$ are (complex) subalgebras
of $\mathfrak{g}^{\mathbb C}$;

2) $J$ is an abelian complex structure iff
$\mathfrak{g}^{\mathbb C}_{{\pm }i}$ are abelian subalgebras
of $\mathfrak{g}^{\mathbb C}$.

One can point out another one important
case:

3) the eigen-spaces $\mathfrak{g}^{\mathbb C}_{{\pm }i}$ of $J$ are
ideals of $\mathfrak{g}^{\mathbb C}$.

The last condition is equivalent to the following one:
\begin{equation}
\label{complexLie}
[JX,Y]=J[X,Y], \; \forall X,Y \in \mathfrak{g}.
\end{equation}
And it is the definition of a complex Lie algebra structure,
i.e. $(\mathfrak{g}, J)$ can be regarded as a complex Lie algebra.

\begin{example}
\label{compl_filif}
Let us consider a Lie algebra $\mathfrak{m}_{0}(n)^{\mathbb R}, n \ge 2$
defined by its
basis $x_1, y_1, x_2, y_2, \dots, x_n, y_n$ and the
structure relations:
$$
[x_1,x_i]=[y_i,y_1]=x_{i{+}1}, \; [x_1,y_i]=[y_1,x_i]=y_{i{+}1},
\;\;
i=2, \dots, n{-}1.
$$
$2n$-dimensional Lie algebra $\mathfrak{m}_{0}(n)^{\mathbb R}$ is
$(n{-}1)$-step nilpotent.

An almost complex structure
$J$ on $\mathfrak{m}_{0}(n)^{\mathbb R}$ that is defined by
$Jy_i=x_i \; i=1, \dots, n$
satisfies (\ref{complexLie}) and
$(\mathfrak{m}_{0}(n)^{\mathbb R}, J)$
is isomorphic to the complex filiform Lie algebra
$\mathfrak{m}_{0}(n)$.
\end{example}

Now we are going to start study of complex structures on nilpotent
Lie algebras. Let $\mathfrak{g}$ a nilpotent Lie algebra
with integrable complex structure $J$ and
$\{\mathfrak{g}^j\}$ its descending central sequence.

An ideal $\mathfrak{g}^l$ is not in general an invariant subspace
with respect to $J$.
One can consider $\mathfrak{g}^l(J)=\mathfrak{g}^l+J\mathfrak{g}^l$
-- the smallest $J$-invariant subspace of $\mathfrak{g}$
containing $\mathfrak{g}^l$.
We have a decreasing sequence of $J$-invariant subspaces
$$\mathfrak{g}^1(J)=\mathfrak{g} \; \supset \;
\mathfrak{g}^2(J)=[\mathfrak{g},\mathfrak{g}]+J[\mathfrak{g},\mathfrak{g}]
\; \supset \; \dots
\; \supset \;
\mathfrak{g}^{s(\mathfrak{g})}(J) \; \supset \{0\}.$$

\begin{proposition}[S.Salamon, \cite{Sal}]

\label{l-commut}
$$[\mathfrak{g}^l(J), \mathfrak{g}^l(J)] \subset \mathfrak{g}^{l{+}1}(J).$$
\end{proposition}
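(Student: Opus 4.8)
The plan is to expand the bracket $[\mathfrak{g}^l(J), \mathfrak{g}^l(J)]$ using bilinearity, writing each argument as a sum of an element of $\mathfrak{g}^l$ and of an element $JW$ with $W \in \mathfrak{g}^l$. This produces four types of terms: $[\mathfrak{g}^l,\mathfrak{g}^l]$, $[\mathfrak{g}^l, J\mathfrak{g}^l]$, $[J\mathfrak{g}^l, \mathfrak{g}^l]$, and $[J\mathfrak{g}^l, J\mathfrak{g}^l]$. The first term lies in $\mathfrak{g}^{2l} \subset \mathfrak{g}^{l+1} \subset \mathfrak{g}^{l+1}(J)$ trivially, since $l \ge 1$.

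For the mixed terms, I would take $X \in \mathfrak{g}$ and $Y \in \mathfrak{g}^l$, so that $[X,Y] \in \mathfrak{g}^{l+1}$ and hence also $J[X,Y] \in \mathfrak{g}^{l+1}(J)$. The key point is that a bracket like $[JX, Y]$ with $Y \in \mathfrak{g}^l$ can be controlled by rewriting the Nijenhuis identity (\ref{Nijenhuis}) as
$$
J[JX,Y] = [JX,JY] - [X,Y] - J[X,JY].
$$
Applying $J$ (using $J^2 = -1$) gives
$$
[JX,Y] = -J[JX,JY] + J[X,Y] + J^2[X,JY] = -J[JX,JY] + J[X,Y] - [X,JY].
$$
Now restrict to $Y \in \mathfrak{g}^l$: then $JY \in \mathfrak{g}^l(J)$, so $[JX,JY]$ and $[X,JY]$ are themselves brackets of an element of $\mathfrak{g}$ (or $\mathfrak{g}^l(J)$) with an element of $\mathfrak{g}^l(J)$; and $J[X,Y] \in \mathfrak{g}^{l+1}(J)$ as noted. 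This suggests proving the slightly stronger statement $[\mathfrak{g}, \mathfrak{g}^l(J)] \subset \mathfrak{g}^{l+1}(J)$ by induction, or equivalently that $\mathfrak{g}^l(J)$ is an ideal-like object modulo $\mathfrak{g}^{l+1}(J)$; once one knows $[\mathfrak{g}, \mathfrak{g}^l(J)] \subset \mathfrak{g}^{l+1}(J)$ and $[\mathfrak{g}, J\mathfrak{g}^l] \subset \mathfrak{g}^{l+1}(J)$, the term $[J\mathfrak{g}^l, J\mathfrak{g}^l]$ follows since $J\mathfrak{g}^l \subset \mathfrak{g}$.

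The cleanest route, which I would actually follow: prove by induction on $l$ the pair of inclusions $[\mathfrak{g}, \mathfrak{g}^l] \subset \mathfrak{g}^{l+1}(J)$ (immediate) and $[\mathfrak{g}, J\mathfrak{g}^l] \subset \mathfrak{g}^{l+1}(J)$. For the latter, take $X \in \mathfrak{g}$, $Y \in \mathfrak{g}^l$ and use the rearranged identity above for $[X, JY] = [JX', Y']$-type expressions; the terms $[JX,JY]$ land in $[\mathfrak{g}, \mathfrak{g}^l(J)]$, and expanding $\mathfrak{g}^l(J) = \mathfrak{g}^l + J\mathfrak{g}^l$ reduces everything to $[\mathfrak{g},\mathfrak{g}^l] \subset \mathfrak{g}^{l+1} \subset \mathfrak{g}^{l+1}(J)$ together with the very inclusion $[\mathfrak{g}, J\mathfrak{g}^l] \subset \mathfrak{g}^{l+1}(J)$ we are proving — so one must be careful that the recursion actually closes rather than being circular. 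The main obstacle, therefore, is bookkeeping: organizing the Nijenhuis identity so that every term produced by expanding $[\mathfrak{g}^l(J),\mathfrak{g}^l(J)]$ is visibly either in $\mathfrak{g}^{l+1}$, in $J\mathfrak{g}^{l+1}$, or an already-handled bracket, without a vicious circle. Once the inclusion $[\mathfrak{g}, J\mathfrak{g}^l] \subset \mathfrak{g}^{l+1}(J)$ is established, the proposition is immediate: all four families of terms in the expansion of $[\mathfrak{g}^l(J), \mathfrak{g}^l(J)]$ lie in $\mathfrak{g}^{l+1}(J)$, and applying $J$ to the whole containment (using that $\mathfrak{g}^{l+1}(J)$ is $J$-invariant by construction) shows nothing more is needed.
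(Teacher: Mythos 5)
Your opening decomposition into four families of terms is the right start and is in fact how the paper's own proof begins, but the execution goes astray in two places. First, the mixed terms need no identity at all: if $X_2\in\mathfrak{g}^l$ and $JY_1\in\mathfrak{g}$, then $[JY_1,X_2]\in[\mathfrak{g},\mathfrak{g}^l]=\mathfrak{g}^{l+1}$ by the very definition of the descending central series, so your detour through a rearranged Nijenhuis identity for $[JX,Y]$ is unnecessary. Second, and more seriously, the route you propose for the one genuinely nontrivial family $[J\mathfrak{g}^l,J\mathfrak{g}^l]$ rests on the auxiliary inclusion $[\mathfrak{g},J\mathfrak{g}^l]\subset\mathfrak{g}^{l+1}(J)$, which is false in general: together with $[\mathfrak{g},\mathfrak{g}^l]\subset\mathfrak{g}^{l+1}$ it would make every $\mathfrak{g}^l(J)$ an ideal of $\mathfrak{g}$, whereas the corollary to the proposition only asserts that $\mathfrak{g}^l(J)$ is an ideal of $\mathfrak{g}^{l-1}(J)$. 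Concretely, in the algebra $\mathfrak{g}_{6,8}$ of Example~\ref{ex_{6,8}} one has $e_3=-Je_6\in J\mathfrak{g}^3$ but $[e_1,e_3]=e_4\notin\mathfrak{g}^4(J)=\mathfrak{g}^4+J\mathfrak{g}^4$, so $[\mathfrak{g},J\mathfrak{g}^3]\not\subset\mathfrak{g}^4(J)$. You flag yourself that the recursion might be circular; the problem is worse than circularity, since the statement the recursion is aimed at is false, so no amount of bookkeeping will close it.

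The repair is a one-line direct application of \eqref{Nijenhuis}, which is exactly what the paper does: for $Y_1,Y_2\in\mathfrak{g}^l$,
$$
[JY_1,JY_2]=[Y_1,Y_2]+J[JY_1,Y_2]+J[Y_1,JY_2],
$$
and each of $[Y_1,Y_2]$, $[JY_1,Y_2]$, $[Y_1,JY_2]$ has at least one argument lying in $\mathfrak{g}^l$ with no $J$ applied, hence lies in $[\mathfrak{g},\mathfrak{g}^l]=\mathfrak{g}^{l+1}$; therefore $[JY_1,JY_2]\in\mathfrak{g}^{l+1}+J\mathfrak{g}^{l+1}=\mathfrak{g}^{l+1}(J)$. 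The point you missed is that the Nijenhuis identity is needed only when \emph{both} arguments carry a $J$, and in that case it trades the bad bracket for three brackets each of which is trivially in $\mathfrak{g}^{l+1}$ --- no induction and no ideal property of $\mathfrak{g}^l(J)$ in $\mathfrak{g}$ is required.
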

\begin{proof}
One can take arbitrary $X_1,X_2,Y_1,Y_2 \in \mathfrak{g}^l$. Then
$$
[X_1+JY_1,X_2+JY_2]=[X_1,X_2]+[JY_1,X_2]+[X_1,JY_2]+[JY_1,JY_2].
$$
The first three summands on the right part of the equality
are obviously in $\mathfrak{g}^{l{+}1}$.
It follows from the integrability condition
(\ref{Nijenhuis}) that $[JY_1,JY_2]$ is in $\mathfrak{g}^l+J\mathfrak{g}^l$
as  $[Y_1,Y_2], [JY_1,Y_2], [Y_1,JY_2] \in \mathfrak{g}^{l{+}1}$.
\end{proof}
\begin{corollary}
A subspace $\mathfrak{g}^l(J)$  is a subalgebra in $\mathfrak{g}(J)$
and an ideal in $\mathfrak{g}^{l{-}1}(J)$ for all $l$.
\end{corollary}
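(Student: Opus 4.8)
The plan is to derive both assertions directly from Proposition \ref{l-commut}, which gives the key containment $[\mathfrak{g}^l(J), \mathfrak{g}^l(J)] \subset \mathfrak{g}^{l+1}(J)$. First I would prove that each $\mathfrak{g}^l(J)$ is a subalgebra of $\mathfrak{g}(J) = \mathfrak{g}$: since $\mathfrak{g}^{l+1}(J) \subset \mathfrak{g}^l(J)$ (the sequence $\{\mathfrak{g}^l(J)\}$ is decreasing, as noted just before the proposition), the inclusion $[\mathfrak{g}^l(J), \mathfrak{g}^l(J)] \subset \mathfrak{g}^{l+1}(J) \subset \mathfrak{g}^l(J)$ shows $\mathfrak{g}^l(J)$ is closed under the bracket, hence a subalgebra.

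Next I would show $\mathfrak{g}^l(J)$ is an ideal in $\mathfrak{g}^{l-1}(J)$. Here the point is monotonicity of the bracket in the superscript: since $\mathfrak{g}^{l-1}(J) \supset \mathfrak{g}^l(J)$, we have
$$
[\mathfrak{g}^{l-1}(J), \mathfrak{g}^l(J)] \subset [\mathfrak{g}^{l-1}(J), \mathfrak{g}^{l-1}(J)] \subset \mathfrak{g}^l(J),
$$
where the last inclusion is again Proposition \ref{l-commut} (applied with index $l-1$). This says precisely that $\mathfrak{g}^l(J)$ is an ideal in $\mathfrak{g}^{l-1}(J)$. Taking $l=2$ recovers in particular that $\mathfrak{g}^2(J) = [\mathfrak{g},\mathfrak{g}] + J[\mathfrak{g},\mathfrak{g}]$ is an ideal in $\mathfrak{g} = \mathfrak{g}^1(J)$.

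There is essentially no obstacle here: the corollary is a formal consequence of Proposition \ref{l-commut} together with the already-observed fact that $\{\mathfrak{g}^l(J)\}$ is a decreasing chain of $J$-invariant subspaces. The only thing one must be mildly careful about is that $\mathfrak{g}^l(J)$ need not be an ideal (or even a subalgebra) of the full $\mathfrak{g}$ for $l \ge 2$ in general — the statement correctly claims only that it is a subalgebra of $\mathfrak{g}(J)$ (which equals $\mathfrak{g}$, so this is the honest "subalgebra of $\mathfrak{g}$" statement) and an ideal one step up in the $J$-adapted filtration, not in $\mathfrak{g}$ itself.
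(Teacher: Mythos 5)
Your proof is correct and is exactly the formal deduction from Proposition \ref{l-commut} that the paper intends (the paper states the corollary without proof as an immediate consequence). Both the subalgebra claim and the ideal claim follow as you describe, using the decreasing chain $\mathfrak{g}^{l}(J)\supset\mathfrak{g}^{l+1}(J)$ and monotonicity of the bracket of subspaces.
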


\begin{proposition}[S. Salamon, \cite{Sal}]
\label{nontrivial_H}
$$
\mathfrak{g}^2(J)=[\mathfrak{g},\mathfrak{g}]+J[\mathfrak{g},\mathfrak{g}]
\ne \mathfrak{g}=\mathfrak{g}^1(J).
$$
\end{proposition}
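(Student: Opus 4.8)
The plan is to argue by contradiction: suppose that $\mathfrak{g}^2(J) = [\mathfrak{g},\mathfrak{g}] + J[\mathfrak{g},\mathfrak{g}] = \mathfrak{g}$, and derive that $\mathfrak{g}$ cannot be nilpotent (or is trivial). The key observation is that Proposition~\ref{l-commut} gives a chain of $J$-invariant subspaces with $[\mathfrak{g}^l(J),\mathfrak{g}^l(J)] \subset \mathfrak{g}^{l+1}(J)$, so if $\mathfrak{g}^2(J) = \mathfrak{g}^1(J)$ then, since $\mathfrak{g}^{l+1}(J) \supset [\mathfrak{g}^l(J),\mathfrak{g}^l(J)]$ and one expects $\mathfrak{g}^{l+1}(J) = \mathfrak{g}^l(J)$ to propagate, the descending sequence $\{\mathfrak{g}^l(J)\}$ stabilizes at $\mathfrak{g}$ and never reaches $\{0\}$. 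Concretely, I would first show that $\mathfrak{g}^2(J) = \mathfrak{g}$ forces $\mathfrak{g}^{l}(J) = \mathfrak{g}$ for all $l$: if $\mathfrak{g}^l(J) = \mathfrak{g}$, then $\mathfrak{g}^{l+1}(J) \supset \mathfrak{g}^2(J) = \mathfrak{g}$ because $\mathfrak{g}^{l+1} = [\mathfrak{g},\mathfrak{g}^l] = [\mathfrak{g},\mathfrak{g}]$ in that case — wait, more carefully, one uses $\mathfrak{g}^{l+1}(J) \supset [\mathfrak{g}^l(J),\mathfrak{g}^l(J)] = [\mathfrak{g},\mathfrak{g}]$, hence $\mathfrak{g}^{l+1}(J) \supset [\mathfrak{g},\mathfrak{g}] + J[\mathfrak{g},\mathfrak{g}] = \mathfrak{g}^2(J) = \mathfrak{g}$, completing the induction.

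Once $\mathfrak{g}^l(J) = \mathfrak{g}$ for every $l$, I would extract a contradiction with nilpotency. Since $\mathfrak{g}$ is nilpotent, there is an $s = s(\mathfrak{g})$ with $\mathfrak{g}^{s+1} = 0$ but $\mathfrak{g}^s \ne 0$. Then $\mathfrak{g}^{s+1}(J) = \mathfrak{g}^{s+1} + J\mathfrak{g}^{s+1} = 0$, contradicting $\mathfrak{g}^{s+1}(J) = \mathfrak{g} \ne 0$ (the algebra is assumed non-trivial; if $\mathfrak{g} = 0$ there is nothing to prove, and if $\mathfrak{g}$ is abelian then $[\mathfrak{g},\mathfrak{g}] = 0$ and $\mathfrak{g}^2(J) = 0 \ne \mathfrak{g}$ already, so the statement holds trivially in that case too). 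This closes the argument.

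The only delicate point is the base/propagation step: one must be sure that $\mathfrak{g}^{l+1}(J)$ genuinely contains $[\mathfrak{g},\mathfrak{g}]$ when $\mathfrak{g}^l(J) = \mathfrak{g}$, and this is exactly what Proposition~\ref{l-commut} delivers, since $[\mathfrak{g}^l(J),\mathfrak{g}^l(J)] = [\mathfrak{g},\mathfrak{g}] \subset \mathfrak{g}^{l+1}(J)$ and $\mathfrak{g}^{l+1}(J)$ is $J$-invariant by construction, hence contains $J[\mathfrak{g},\mathfrak{g}]$ as well. I expect no real obstacle here; the content is entirely in Proposition~\ref{l-commut}, and the present statement is essentially the observation that a nilpotent Lie algebra cannot have $\mathfrak{g}^2(J)$ as large as $\mathfrak{g}$ — this is the complex-structure analogue of the elementary fact that $[\mathfrak{g},\mathfrak{g}] \ne \mathfrak{g}$ for nilpotent $\mathfrak{g}$. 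An alternative, perhaps cleaner, route is to note directly that $[\mathfrak{g},\mathfrak{g}] + J[\mathfrak{g},\mathfrak{g}]$ is a proper subspace whenever $\dim([\mathfrak{g},\mathfrak{g}]) < \tfrac{1}{2}\dim\mathfrak{g}$, but this bound need not hold in general, so the iterated argument via Proposition~\ref{l-commut} is the robust one.
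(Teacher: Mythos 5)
Your argument is correct and is essentially the paper's own proof: both rest entirely on Proposition~\ref{l-commut} together with the $J$-invariance of $\mathfrak{g}^{l+1}(J)$, you just propagate the equality $\mathfrak{g}^{l}(J)=\mathfrak{g}$ forward by induction until it collides with $\mathfrak{g}^{s+1}(J)=0$, whereas the paper picks the last index $j_0$ at which $\mathfrak{g}^{j_0}(J)=\mathfrak{g}$ and contradicts $\mathfrak{g}^2(J)=\mathfrak{g}$ in a single step. The difference is purely one of bookkeeping, not of substance.
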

\begin{proof}
Let assume that $\mathfrak{g}^2(J)=\mathfrak{g}$, then
exists $2\le j_0 \le s(\mathfrak{g})$
such that
$$\mathfrak{g}^{j_0}(J)= \mathfrak{g}, \quad
\mathfrak{g}^{j_0{+}1}(J)\neq \mathfrak{g}.$$
It follows that $[\mathfrak{g},\mathfrak{g}]=
[\mathfrak{g}^{j_0}(J),\mathfrak{g}^{j_0}(J)] \subset
\mathfrak{g}^{j_0{+}1}(J) \neq \mathfrak{g}$,
the subspace $\mathfrak{g}^{j_0{+}1}(J)$ is $J$-invariant,
hence $J[\mathfrak{g},\mathfrak{g}] \subset \mathfrak{g}^{j_0{+}1}(J)$ also.
Combining these results we have an inclusion
$$\mathfrak{g}^{2}(J)=[\mathfrak{g},\mathfrak{g}]+J[\mathfrak{g},\mathfrak{g}]
\subset \mathfrak{g}^{j_0{+}1}(J) \neq \mathfrak{g}$$
that contradicts to our initial assumption.
\end{proof}

\section{Minimal models and complex structures}
\label{minimalmodels}
Given a Lie algebra $\mathfrak{g}$ with an integrable
complex structure $J$.
One can consider its conjugate $J$ (we will keep the same notation for it)
acting on $\mathfrak{g}^*$. 
$$
Jf(X)=f(JX), f \in \mathfrak{g}^*, X \in \mathfrak{g}.
$$
Proceeding to the complexification of $J$
we have a splitting
\begin{equation}
\label{1,0-split}
(\mathfrak{g}^*)^{\mathbb C}=
\Lambda^{1,0} \oplus \Lambda^{0,1},
\end{equation}
where $\Lambda^{1,0}=\{f-iJf: f \in \mathfrak{g}^* \}$ and
$\Lambda^{0,1}=\{f+iJf: f \in \mathfrak{g}^* \}$ are
the eigen-spaces of the complexification of $J$ that correspond to the
eigen-values $\pm i$ respectively.  Also we have
$\Lambda^{1,0}=(\mathfrak{g}^{\mathbb C}_{i})^*$
and $\Lambda^{0,1}=(\mathfrak{g}^{\mathbb C}_{{-}i})^*$.

The splitting (\ref{1,0-split}) induces a decomposition
$$
\Lambda^k((\mathfrak{g}^{\mathbb C})^*)=
\bigoplus_{p{+}q{=}k}\Lambda^{p,q},
$$
where
$\Lambda^{p,q}=\Lambda^p((\mathfrak{g}^{\mathbb C}_{i})^*)\otimes
\Lambda^q((\mathfrak{g}^{\mathbb C}_{{-}i})^*)$ is the subspace of
$(p,q)$-forms relative to $J$.

For a given subspace $\mathfrak{a} \subset \mathfrak{g}$ let us denote by
$\mathfrak{a}^{ann}$ its annihilator in $\mathfrak{g}^*$:
$$
\mathfrak{a}^{ann}=\left\{ f \in \mathfrak{g}^* | f (X)=0, \forall X \in \mathfrak{a}\right\}.
$$
Now one can consider an obvious lemma:
\begin{lemma}
\label{d-annihil}
Let $\mathfrak{a}$ and $\mathfrak{b}$ be two subspaces of $\mathfrak{g}$ then
$$d \mathfrak{b}^{ann} \subset \mathfrak{a}^{ann} \wedge \mathfrak{g}^*$$
if and only if $[\mathfrak{a},\mathfrak{a}] \subset \mathfrak{b}$.
\end{lemma}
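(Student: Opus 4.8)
The statement is an equivalence, so I would prove it by establishing the duality between the bracket $[\,,]\colon\Lambda^2\mathfrak{g}\to\mathfrak{g}$ and the differential $d_1\colon\mathfrak{g}^*\to\Lambda^2(\mathfrak{g}^*)$, and then reading off what each side of the claimed inclusion means in terms of evaluation on vectors. The key identity is the defining one recalled earlier in the paper, $d_1f(X,Y)=f([X,Y])$ for $f\in\mathfrak{g}^*$ and $X,Y\in\mathfrak{g}$, extended to all of $\Lambda^*(\mathfrak{g}^*)$ by the Leibniz rule; since both subspaces we care about, $\mathfrak{b}^{ann}$ and $\mathfrak{a}^{ann}\wedge\mathfrak{g}^*$, sit in degrees $1$ and $2$ respectively, only $d_1$ is relevant here.

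First I would fix the elementary observation describing $\mathfrak{a}^{ann}\wedge\mathfrak{g}^*$ as a subspace of $\Lambda^2(\mathfrak{g}^*)$: a $2$-form $\omega$ lies in $\mathfrak{a}^{ann}\wedge\mathfrak{g}^*$ if and only if $\omega(X,Y)=0$ for all $X,Y\in\mathfrak{a}$, i.e. the restriction of $\omega$ to $\Lambda^2\mathfrak{a}$ vanishes. (One direction is immediate; for the converse one extends a basis of $\mathfrak{a}$ to a basis of $\mathfrak{g}$, writes $\omega$ in the dual basis, and checks that the vanishing condition kills exactly the $\Lambda^2\mathfrak{a}$-components, leaving a sum of terms each involving a covector from $\mathfrak{a}^{ann}$.) This is the one spot requiring a genuine, if routine, linear-algebra argument, and it is the main obstacle only in the bookkeeping sense — there is no conceptual difficulty.

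Then the proof closes by a direct chase. Suppose $[\mathfrak{a},\mathfrak{a}]\subset\mathfrak{b}$. For any $f\in\mathfrak{b}^{ann}$ and any $X,Y\in\mathfrak{a}$ we get $d_1f(X,Y)=f([X,Y])=0$ because $[X,Y]\in[\mathfrak{a},\mathfrak{a}]\subset\mathfrak{b}$ and $f$ annihilates $\mathfrak{b}$; by the observation of the previous paragraph this means $d_1f\in\mathfrak{a}^{ann}\wedge\mathfrak{g}^*$, which gives the inclusion $d\,\mathfrak{b}^{ann}\subset\mathfrak{a}^{ann}\wedge\mathfrak{g}^*$. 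Conversely, assume this inclusion holds but $[\mathfrak{a},\mathfrak{a}]\not\subset\mathfrak{b}$; pick $X,Y\in\mathfrak{a}$ with $[X,Y]\notin\mathfrak{b}$ and choose $f\in\mathfrak{g}^*$ with $f(\mathfrak{b})=0$ but $f([X,Y])\neq 0$ (possible since $[X,Y]\notin\mathfrak{b}$ means $\mathfrak{b}$ and the line through $[X,Y]$ are not nested the wrong way — concretely, $\mathfrak{b}+\mathbb{K}[X,Y]\supsetneq\mathfrak{b}$, so a suitable functional exists). Then $f\in\mathfrak{b}^{ann}$ but $d_1f(X,Y)=f([X,Y])\neq 0$ with $X,Y\in\mathfrak{a}$, so $d_1f\notin\mathfrak{a}^{ann}\wedge\mathfrak{g}^*$, contradicting the hypothesis. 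This completes the equivalence.
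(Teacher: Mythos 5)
Your proposal is correct and follows essentially the same route as the paper's proof: both rest on the identity $d_1f(X,Y)=f([X,Y])$ together with the characterization of $\mathfrak{a}^{ann}\wedge\mathfrak{g}^*$ as the $2$-forms vanishing on $\Lambda^2\mathfrak{a}$. You merely spell out the linear-algebra step and the converse direction that the paper leaves implicit.
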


\begin{proof}
Let $ f \in \mathfrak{b}^{ann}$. Then $df \in \mathfrak{a}^{ann} \wedge \mathfrak{g}^*$, or
$$
df(X,Y)=f([X,Y])=0, \forall X,Y \in \mathfrak{a},
$$ 
if and only if,
$[X,Y] \in \mathfrak{b}, \forall X,Y \in \mathfrak{a}$.
\end{proof}

\begin{corollary}
\label{d-annihilator} Let $\mathfrak{a}$ be a subspace of a Lie
algebra $\mathfrak{g}$. The ideal $I({\mathfrak{a}}^{ann})$
generated by the annihilator ${\mathfrak{a}}^{ann}$ in the
exterior algebra $\Lambda(\mathfrak{g}^*)$ is $d$-closed
$$d {\mathfrak{a}}^{ann} \subset {\mathfrak{a}}^{ann}
\wedge \mathfrak{g}^*$$ if and only if
$\mathfrak{a}$ is a subalgebra of $\mathfrak{g}$.
\end{corollary}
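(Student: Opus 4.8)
The plan is to deduce the corollary directly from Lemma \ref{d-annihil} by specializing $\mathfrak{b}:=\mathfrak{a}$, after first reducing the assertion about the generated ideal $I(\mathfrak{a}^{ann})$ to the assertion about the subspace $\mathfrak{a}^{ann}\subset\mathfrak{g}^*$ itself.

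First I would record that the ideal generated by $\mathfrak{a}^{ann}$ in $\Lambda(\mathfrak{g}^*)$ is $I(\mathfrak{a}^{ann})=\mathfrak{a}^{ann}\wedge\Lambda(\mathfrak{g}^*)$, so a general element is a finite sum of terms $\alpha\wedge\beta$ with $\alpha\in\mathfrak{a}^{ann}$ and $\beta\in\Lambda(\mathfrak{g}^*)$. Since $d$ is a derivation, $d(\alpha\wedge\beta)=d\alpha\wedge\beta-\alpha\wedge d\beta$, and the second summand already lies in $I(\mathfrak{a}^{ann})$; hence $dI(\mathfrak{a}^{ann})\subset I(\mathfrak{a}^{ann})$ holds if and only if $d\alpha\in I(\mathfrak{a}^{ann})$ for every $\alpha\in\mathfrak{a}^{ann}$. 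But $d\alpha$ is a $2$-form, and the degree-$2$ component of $I(\mathfrak{a}^{ann})$ is precisely $\mathfrak{a}^{ann}\wedge\mathfrak{g}^*$; there is no room in lower degree. Therefore $I(\mathfrak{a}^{ann})$ is $d$-closed if and only if $d\mathfrak{a}^{ann}\subset\mathfrak{a}^{ann}\wedge\mathfrak{g}^*$, which is exactly the condition displayed in the corollary.

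It then remains to apply Lemma \ref{d-annihil} with $\mathfrak{b}:=\mathfrak{a}$: it yields $d\mathfrak{a}^{ann}\subset\mathfrak{a}^{ann}\wedge\mathfrak{g}^*$ if and only if $[\mathfrak{a},\mathfrak{a}]\subset\mathfrak{a}$, i.e. if and only if $\mathfrak{a}$ is a subalgebra of $\mathfrak{g}$. Chaining this with the equivalence of the previous paragraph completes the proof. The only point needing a word of care — and it is not really an obstacle — is that first reduction: one must be sure that checking $d$-closedness on the generators $\alpha\in\mathfrak{a}^{ann}$ suffices, which is immediate from the Leibniz rule together with the degree observation that a $2$-form lying in $I(\mathfrak{a}^{ann})$ must already lie in $\mathfrak{a}^{ann}\wedge\mathfrak{g}^*$. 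Everything else is a direct invocation of Lemma \ref{d-annihil}.
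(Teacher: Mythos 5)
Your proof is correct and follows the route the paper intends: the corollary is exactly Lemma \ref{d-annihil} specialized to $\mathfrak{b}=\mathfrak{a}$, and your preliminary reduction of $d$-closedness of $I(\mathfrak{a}^{ann})$ to the displayed condition on generators (via the Leibniz rule and the degree count) is the routine step the paper leaves implicit. Nothing is missing.
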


Now we can interprete $\Lambda^{1,0}$ as the annihilator
of the subalgebra $\mathfrak{g}^{\mathbb C}_{{-}i}$ and
applying Corollary \ref{d-annihilator} rewrite again
the integrability conditions:

1)  (\ref{Nijenhuis})
holds for an almost complex structure $J$ iff
$$d\Lambda^{1,0} \subset \Lambda^{2,0}\oplus \Lambda^{1,1};$$

2) the abelian property (\ref{abelian}) holds for $J$ iff
$$d\Lambda^{1,0} \subset \Lambda^{1,1};$$

3) $J$ is a complex Lie algebra structure (\ref{complexLie})
iff
$$ d\Lambda^{1,0} \subset \Lambda^{2,0}.$$

Let us consider an increasing sequence of complex subspaces in $\Lambda^{1,0}$:
$$V_0^{1,0}=\{0\} \subset V_1^{1,0} \subset \dots \subset
V_{s(\mathfrak{g})}^{1,0}=\Lambda^{1{,}0},$$
where
$$
V_l^{1,0}=(V_l\mathfrak{g}^*)^{\mathbb C}
\cap \Lambda^{1{,}0}, \; l=0,1,\dots,s(\mathfrak{g}).
$$

\begin{remark}
$V_1^{1,0}$ is the subspace of closed holomorphic $1$-forms.
\end{remark}

\begin{proposition}
$V_{l}^{1,0}$ is the annihilator of
$\tilde {\mathfrak g}^{l{+}1}=\mathfrak{g}^{l{+}1}(J)^{\mathbb C}+\mathfrak{g}^{\mathbb C}_{{-}i}$.
\end{proposition}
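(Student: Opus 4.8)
The plan is to unwind both sides to a statement about subspaces of the complexification $\mathfrak{g}^{\mathbb C}$ and then invoke Lemma~\ref{d-annihil}. Recall that $V_l\mathfrak{g}^*$ is the annihilator of $\mathfrak{g}^{l+1}$ in $\mathfrak{g}^*$, so $(V_l\mathfrak{g}^*)^{\mathbb C}$ is the annihilator of $(\mathfrak{g}^{l+1})^{\mathbb C}$ in $(\mathfrak{g}^{\mathbb C})^*$; and $\Lambda^{1,0}$ is the annihilator of the subalgebra $\mathfrak{g}^{\mathbb C}_{-i}$. Hence $V_l^{1,0}=(V_l\mathfrak{g}^*)^{\mathbb C}\cap\Lambda^{1,0}$ is precisely the annihilator (inside $(\mathfrak{g}^{\mathbb C})^*$) of the subspace $(\mathfrak{g}^{l+1})^{\mathbb C}+\mathfrak{g}^{\mathbb C}_{-i}$. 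So the remaining task is to show that this annihilator equals the annihilator of the \emph{a priori} larger subspace $\tilde{\mathfrak g}^{l+1}=\mathfrak{g}^{l+1}(J)^{\mathbb C}+\mathfrak{g}^{\mathbb C}_{-i}$, i.e.\ that
$$
(\mathfrak{g}^{l+1})^{\mathbb C}+\mathfrak{g}^{\mathbb C}_{-i}
= \mathfrak{g}^{l+1}(J)^{\mathbb C}+\mathfrak{g}^{\mathbb C}_{-i}.
$$
The nontrivial inclusion is $\supseteq$: one must check $J\mathfrak{g}^{l+1}$ lies, after complexification, in $(\mathfrak{g}^{l+1})^{\mathbb C}+\mathfrak{g}^{\mathbb C}_{-i}$. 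This is where I expect the real content to sit.

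First I would make the identification $V_l^{1,0}=\big((\mathfrak{g}^{l+1})^{\mathbb C}+\mathfrak{g}^{\mathbb C}_{-i}\big)^{ann}$ fully explicit, simply because the annihilator of a sum is the intersection of the annihilators. Then I would establish the key subspace identity above. For the inclusion $J\mathfrak{g}^{l+1}\subset(\mathfrak{g}^{l+1})^{\mathbb C}+\mathfrak{g}^{\mathbb C}_{-i}$, take $X\in\mathfrak{g}^{l+1}$ and write $JX=\tfrac12(X+iJX)+\tfrac12(X-iJX)$; the summand $\tfrac12(X-iJX)$ lies in $\mathfrak{g}^{\mathbb C}_{-i}$ by the very definition of the $-i$-eigenspace, while $\tfrac12(X+iJX)$ differs from $\tfrac12\cdot\text{(something real)}$... more cleanly: $JX=-\,i\big(\tfrac12(X+iJX)\big)+i\big(\tfrac12(X-iJX)\big)$ shows $JX$ is an $i$-linear combination of a vector in $\mathfrak{g}^{\mathbb C}_{i}$ and a vector in $\mathfrak{g}^{\mathbb C}_{-i}$. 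But $\mathfrak{g}^{\mathbb C}_{i}\subset(\mathfrak{g}^{l+1})^{\mathbb C}$ is false in general — so instead I would argue directly that $\mathfrak{g}^{l+1}+J\mathfrak{g}^{l+1}=\mathfrak{g}^{l+1}(J)$ complexifies to $(\mathfrak{g}^{l+1})^{\mathbb C}+\mathfrak{g}^{\mathbb C}_{i}+\mathfrak{g}^{\mathbb C}_{-i}$ intersected appropriately, and observe that modulo $\mathfrak{g}^{\mathbb C}_{-i}$ the projection of $X+iJX$ (for $X\in\mathfrak{g}^{l+1}$) already spans the same subspace of $\mathfrak{g}^{\mathbb C}/\mathfrak{g}^{\mathbb C}_{-i}\cong\mathfrak{g}^{\mathbb C}_{i}$ as the projections of all of $\mathfrak{g}^{l+1}(J)$. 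Concretely: for $X\in\mathfrak g^{l+1}$ both $X$ and $JX$ project into $\mathfrak g^{\mathbb C}/\mathfrak g^{\mathbb C}_{-i}$ to $\mathbb C$-multiples of the image of $X-iJX$'s complement, so $\mathfrak g^{l+1}(J)^{\mathbb C}$ and $(\mathfrak g^{l+1})^{\mathbb C}$ have the same image there, which gives the claimed equality of subspaces.

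With the subspace identity in hand, the proposition follows immediately, but I would double-check consistency by re-deriving the endpoint cases: for $l=0$, $\tilde{\mathfrak g}^{1}=\mathfrak{g}^{\mathbb C}$, whose annihilator is $\{0\}=V_0^{1,0}$; and for $l=s(\mathfrak{g})$, $\mathfrak{g}^{s(\mathfrak{g})+1}(J)=\{0\}$, so $\tilde{\mathfrak g}^{s(\mathfrak g)+1}=\mathfrak{g}^{\mathbb C}_{-i}$, whose annihilator is $\Lambda^{1,0}=V_{s(\mathfrak g)}^{1,0}$ — matching the chain displayed before the statement. It is also worth cross-checking against Lemma~\ref{d-annihil}: since $\mathfrak{g}^{l+1}(J)$ is a $J$-invariant subalgebra (by the Corollary to Proposition~\ref{l-commut}) and $\mathfrak{g}^{\mathbb C}_{-i}$ is an (ideal inside a) subalgebra, $\tilde{\mathfrak g}^{l+1}$ is a complex subalgebra, so by Corollary~\ref{d-annihilator} its annihilator is a $d$-closed ideal — consistent with $V_l^{1,0}$ being built from the $d$-filtration $V_l\mathfrak{g}^*$.

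The main obstacle is the passage $J\mathfrak g^{l+1}\subset(\mathfrak g^{l+1})^{\mathbb C}+\mathfrak g^{\mathbb C}_{-i}$: the cleanest route is the ``modulo $\mathfrak g^{\mathbb C}_{-i}$'' projection argument, since a naive attempt to place $JX$ in $(\mathfrak g^{l+1})^{\mathbb C}$ itself fails — $\mathfrak g^{l+1}$ need not be $J$-invariant, which is exactly why $\mathfrak g^{l+1}(J)$ was introduced in the first place.
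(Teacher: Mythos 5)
Your proposal is correct and follows essentially the same route as the paper: identify $V_l^{1,0}=(V_l\mathfrak{g}^*)^{\mathbb C}\cap\Lambda^{1,0}$ as the annihilator of $(\mathfrak{g}^{l+1})^{\mathbb C}+\mathfrak{g}^{\mathbb C}_{-i}$, then show this subspace equals $\mathfrak{g}^{l+1}(J)^{\mathbb C}+\mathfrak{g}^{\mathbb C}_{-i}$. Your ``modulo $\mathfrak{g}^{\mathbb C}_{-i}$'' argument is exactly the paper's one-line justification that $X+iJX\in\mathfrak{g}^{\mathbb C}_{-i}$ for $X\in\mathfrak{g}^{l+1}$ (equivalently $JX\equiv iX$ in the quotient), so $J\mathfrak{g}^{l+1}$ contributes nothing new to the sum.
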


\begin{proof}
$$
\left((\mathfrak{g}^{l{+}1})^{\mathbb C}
+\mathfrak{g}^{\mathbb C}_{{-}i}\right)^{ann}=
(V_l\mathfrak{g}^*)^{\mathbb C}
\cap (\mathfrak{g}^{\mathbb C}_{-i})^{ann}=V_l^{1,0}.
$$
But in the same time
$$
\mathfrak{g}^{l{+}1}(J)^{\mathbb C}+\mathfrak{g}^{\mathbb C}_{{-}i}=
({\mathfrak{g}^{l{+}1}})^{\mathbb C}+\mathfrak{g}^{\mathbb C}_{{-}i},
$$
because $X+iJX \in 
\mathfrak{g}^{\mathbb C}_{{-}i}, \;\forall X \in (\mathfrak{g}^{l{+}1})^{\mathbb C}$.
\end{proof}

\begin{proposition}
$\tilde {\mathfrak g}^{l}=(\mathfrak{g}^{l})^{\mathbb C}{+}\mathfrak{g}^{\mathbb C}_{{-}i}{=}
\mathfrak{g}^{l}(J)^{\mathbb
C}{+}\mathfrak{g}^{\mathbb C}_{{-}i}$ is a subalgebra of
$\mathfrak{g}^{\mathbb C}$, moreover
$$[\tilde {\mathfrak g}^{l}, \tilde {\mathfrak g}^{l}] \subset
\tilde {\mathfrak g}^{l{+}1}.$$
\end{proposition}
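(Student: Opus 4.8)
The strategy is to reduce everything to the single inclusion $[\tilde{\mathfrak g}^{l},\tilde{\mathfrak g}^{l}]\subset\tilde{\mathfrak g}^{l+1}$: once this is established, the fact that $\tilde{\mathfrak g}^{l}$ is a subalgebra is automatic, since $\tilde{\mathfrak g}^{l+1}=(\mathfrak{g}^{l+1})^{\mathbb{C}}+\mathfrak{g}^{\mathbb{C}}_{-i}\subset(\mathfrak{g}^{l})^{\mathbb{C}}+\mathfrak{g}^{\mathbb{C}}_{-i}=\tilde{\mathfrak g}^{l}$ (because $\mathfrak{g}^{l+1}\subset\mathfrak{g}^{l}$). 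The identity $(\mathfrak{g}^{l})^{\mathbb{C}}+\mathfrak{g}^{\mathbb{C}}_{-i}=\mathfrak{g}^{l}(J)^{\mathbb{C}}+\mathfrak{g}^{\mathbb{C}}_{-i}$ I would prove exactly as in the preceding proposition: for every $X\in\mathfrak{g}^{l}$ one has $X+iJX\in\mathfrak{g}^{\mathbb{C}}_{-i}$, hence $JX\in(\mathfrak{g}^{l})^{\mathbb{C}}+\mathfrak{g}^{\mathbb{C}}_{-i}$, which gives the inclusion $\supseteq$; the inclusion $\subseteq$ is clear from $\mathfrak{g}^{l}\subset\mathfrak{g}^{l}(J)$.

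To prove the commutator inclusion, I would take arbitrary elements of $\tilde{\mathfrak g}^{l}=(\mathfrak{g}^{l})^{\mathbb{C}}+\mathfrak{g}^{\mathbb{C}}_{-i}$, say $Z_{1}=X_{1}+U_{1}$ and $Z_{2}=X_{2}+U_{2}$ with $X_{1},X_{2}\in(\mathfrak{g}^{l})^{\mathbb{C}}$ and $U_{1},U_{2}\in\mathfrak{g}^{\mathbb{C}}_{-i}$, and expand by bilinearity:
$$[Z_{1},Z_{2}]=[X_{1},X_{2}]+[X_{1},U_{2}]+[U_{1},X_{2}]+[U_{1},U_{2}].$$
Then I would dispose of the four summands one at a time. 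Since $\mathfrak{g}^{l}$ is an \emph{ideal} of $\mathfrak{g}$, we have $[(\mathfrak{g}^{l})^{\mathbb{C}},\mathfrak{g}^{\mathbb{C}}]=(\mathfrak{g}^{l+1})^{\mathbb{C}}$; in particular each of $[X_{1},X_{2}]$, $[X_{1},U_{2}]$, $[U_{1},X_{2}]$ lies in $(\mathfrak{g}^{l+1})^{\mathbb{C}}\subset\tilde{\mathfrak g}^{l+1}$ (for the first term it already suffices that $[\mathfrak{g}^{l},\mathfrak{g}^{l}]\subset\mathfrak{g}^{l+1}$). Finally $[U_{1},U_{2}]\in\mathfrak{g}^{\mathbb{C}}_{-i}$ because $J$ is integrable, so $\mathfrak{g}^{\mathbb{C}}_{-i}$ is a subalgebra of $\mathfrak{g}^{\mathbb{C}}$; and $\mathfrak{g}^{\mathbb{C}}_{-i}\subset\tilde{\mathfrak g}^{l+1}$ by definition. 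Adding up, $[Z_{1},Z_{2}]\in\tilde{\mathfrak g}^{l+1}$, which is the required inclusion.

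The only step that needs a moment's thought is the behaviour of the mixed brackets $[X_{j},U_{k}]$: a priori $U_{k}$ is just some vector in $\mathfrak{g}^{\mathbb{C}}$, so one might fear that $[X_{j},U_{k}]$ sits only in $\tilde{\mathfrak g}^{l}$ and not in $\tilde{\mathfrak g}^{l+1}$. The resolution, which is really the heart of the matter, is that $\mathfrak{g}^{l}$ is an ideal of the \emph{whole} Lie algebra $\mathfrak{g}$ — not merely of $\mathfrak{g}^{l}(J)$ or of $\tilde{\mathfrak g}^{l}$ — so bracketing an element of $\mathfrak{g}^{l}$ against an arbitrary element of $\mathfrak{g}$ already drops into $\mathfrak{g}^{l+1}$; this is where the gain of one step comes from, and integrability of $J$ is used only to control the single purely antiholomorphic term $[U_{1},U_{2}]$. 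One could instead try to dualize the claim via Lemma~\ref{d-annihil} together with the identification of $V_{l}^{1,0}$ as the annihilator of $\tilde{\mathfrak g}^{l+1}$ from the previous proposition, reducing it to a statement about $d$ acting on $V_{l}^{1,0}$; but this route runs into the fact that the subspaces $V_{l}\mathfrak{g}^{*}$ need not be $J$-invariant, so the direct computation above is the cleaner one.
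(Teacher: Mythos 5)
Your proof is correct. The paper actually states this proposition without any proof at all, so there is nothing to diverge from; your argument supplies the missing details in the natural way, directly paralleling the paper's proof of Proposition~\ref{l-commut}. Your decomposition is the right one: writing elements of $\tilde{\mathfrak g}^{l}$ as $X+U$ with $X\in(\mathfrak{g}^{l})^{\mathbb C}$ and $U\in\mathfrak{g}^{\mathbb C}_{-i}$ (rather than using the $\mathfrak{g}^{l}(J)^{\mathbb C}$ description) is what makes the mixed brackets harmless, since $[\mathfrak{g},\mathfrak{g}^{l}]=\mathfrak{g}^{l+1}$ by definition of the descending central series, while integrability is needed only for the single term $[U_{1},U_{2}]$. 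Your closing observation that the subalgebra property follows from the commutator inclusion via $\tilde{\mathfrak g}^{l+1}\subset\tilde{\mathfrak g}^{l}$ is also correct.
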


Applying Lemma \ref{d-annihil} and the previous two propositions we obtain
\begin{corollary}[\cite{Sal}]
$$
d V_l^{1,0}  \subset 
V_{l{-}1}^{1,0}\wedge (\mathfrak{g}^{\mathbb C})^* .
$$
\end{corollary}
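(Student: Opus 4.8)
The plan is to deduce the corollary purely formally from Lemma~\ref{d-annihil}, using the two preceding propositions to identify the annihilators involved. First I would observe that the proposition computing $V_l^{1,0}$ as an annihilator, applied both at level $l$ and at level $l-1$, gives
$$
V_l^{1,0}=\left(\tilde{\mathfrak g}^{l+1}\right)^{ann},\qquad
V_{l-1}^{1,0}=\left(\tilde{\mathfrak g}^{l}\right)^{ann},
$$
where $\tilde{\mathfrak g}^{l}=(\mathfrak g^{l})^{\mathbb C}+\mathfrak g^{\mathbb C}_{-i}=\mathfrak g^{l}(J)^{\mathbb C}+\mathfrak g^{\mathbb C}_{-i}$, all annihilators now being taken inside $(\mathfrak g^{\mathbb C})^*$.

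Next I would invoke the last proposition, which asserts precisely that $\tilde{\mathfrak g}^{l}$ is a subalgebra of $\mathfrak g^{\mathbb C}$ with
$$
\left[\tilde{\mathfrak g}^{l},\tilde{\mathfrak g}^{l}\right]\subset\tilde{\mathfrak g}^{l+1}.
$$
This is exactly the right-hand condition in the equivalence of Lemma~\ref{d-annihil}. Applying that lemma with $\mathfrak a=\tilde{\mathfrak g}^{l}$ and $\mathfrak b=\tilde{\mathfrak g}^{l+1}$ (the lemma's proof uses only $df(X,Y)=f([X,Y])$ and so transfers verbatim to the complexified algebra, with $\mathfrak g^*$ replaced by $(\mathfrak g^{\mathbb C})^*$) yields
$$
d\left(\tilde{\mathfrak g}^{l+1}\right)^{ann}\subset\left(\tilde{\mathfrak g}^{l}\right)^{ann}\wedge(\mathfrak g^{\mathbb C})^*,
$$
which upon substituting the identifications above is precisely $dV_l^{1,0}\subset V_{l-1}^{1,0}\wedge(\mathfrak g^{\mathbb C})^*$.

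I would then check the boundary case $l=1$ as a sanity test: there $V_0^{1,0}=\{0\}$ and the statement reads $dV_1^{1,0}=0$, i.e.\ $V_1^{1,0}$ consists of closed $(1,0)$-forms, consistent with the Remark identifying $V_1^{1,0}$ with the closed holomorphic $1$-forms and with $\tilde{\mathfrak g}^1=\mathfrak g^{\mathbb C}$ being trivially a subalgebra. Since the hard work—showing $V_l^{1,0}$ is the annihilator of a complex subalgebra, and that these subalgebras form a descending chain with $[\tilde{\mathfrak g}^{l},\tilde{\mathfrak g}^{l}]\subset\tilde{\mathfrak g}^{l+1}$—has already been done in the two preceding propositions, there is no genuine obstacle here; the only point deserving an explicit word is that Lemma~\ref{d-annihil}, stated for subspaces of the real algebra $\mathfrak g$, applies unchanged over $\mathbb C$, so the corollary is essentially a bookkeeping consequence.
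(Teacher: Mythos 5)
Your argument is correct and is precisely the route the paper takes: its proof of this corollary is the single line ``Applying Lemma~\ref{d-annihil} and the previous two propositions we obtain,'' which unpacks exactly into your identifications $V_l^{1,0}=(\tilde{\mathfrak g}^{l+1})^{ann}$, $V_{l-1}^{1,0}=(\tilde{\mathfrak g}^{l})^{ann}$ together with $[\tilde{\mathfrak g}^{l},\tilde{\mathfrak g}^{l}]\subset\tilde{\mathfrak g}^{l+1}$ fed into the lemma. Your added remarks on the complexified version of the lemma and the $l=1$ boundary case are correct bookkeeping that the paper leaves implicit.
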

However we need to precise this statement 
\begin{lemma}
\label{d-J-model}
\begin{equation}
\label{d-J-property}
d V_l^{1,0} \subset V_{l{-}1}^{1,0}\wedge (V_{l{-}1}\mathfrak{g}^*)^{\mathbb C}.
\end{equation}
\end{lemma}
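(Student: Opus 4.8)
The plan is to reinterpret both sides of (\ref{d-J-property}) through annihilators and to reduce the claim to an elementary fact about the wedge product of a nested pair of annihilators. By the Proposition identifying $V_l^{1,0}$ with the annihilator of $\tilde{\mathfrak g}^{l+1}$, the space $V_l^{1,0}$ is the annihilator in $(\mathfrak{g}^{\mathbb C})^*$ of $\tilde{\mathfrak g}^{l+1}$ and $V_{l-1}^{1,0}$ that of $\tilde{\mathfrak g}^l$; likewise $V_{l-1}\mathfrak{g}^*$ annihilates $\mathfrak{g}^l$, so after complexification $(V_{l-1}\mathfrak{g}^*)^{\mathbb C}$ is the annihilator in $(\mathfrak{g}^{\mathbb C})^*$ of $(\mathfrak{g}^l)^{\mathbb C}$. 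Since $(\mathfrak{g}^l)^{\mathbb C}\subseteq\tilde{\mathfrak g}^l$, the two spaces $V_{l-1}^{1,0}\subseteq(V_{l-1}\mathfrak{g}^*)^{\mathbb C}$ occurring on the right of (\ref{d-J-property}) are annihilators of a nested pair of subalgebras.

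Next I would record the linear-algebra ingredient: for subspaces $\mathfrak{c}\subseteq\mathfrak{b}$ of a finite-dimensional vector space $W$, a $2$-form $\omega\in\Lambda^2 W^*$ belongs to $\mathfrak{b}^{ann}\wedge\mathfrak{c}^{ann}$ whenever $\omega(x,y)=0$ for all $x,y\in\mathfrak{b}$ and $\iota_x\omega=0$ for all $x\in\mathfrak{c}$. Only this sufficiency direction is needed, and one should resist replacing the two hypotheses by the single weaker condition $\omega(\mathfrak{b},\mathfrak{c})=0$, which in general cuts out a strictly larger subspace. The proof: pick a basis $e_1,\dots,e_m$ of $W$ with $\mathfrak{c}=\langle e_1,\dots,e_j\rangle$, $\mathfrak{b}=\langle e_1,\dots,e_k\rangle$, $j\le k$, and expand $\omega=\sum_{p<q}c_{pq}\,e_p^*\wedge e_q^*$; the hypothesis $\iota_x\omega=0$ on $\mathfrak{c}$ kills every $c_{pq}$ with $p\le j$, while $\omega|_{\mathfrak{b}\times\mathfrak{b}}=0$ kills every $c_{pq}$ with $q\le k$, so each surviving monomial has $e_p^*\in\mathfrak{c}^{ann}$ and $e_q^*\in\mathfrak{b}^{ann}$.

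Finally I would apply this inside $W=\mathfrak{g}^{\mathbb C}$ with $\mathfrak{b}=\tilde{\mathfrak g}^l$, $\mathfrak{c}=(\mathfrak{g}^l)^{\mathbb C}$ and $\omega=df$ for $f\in V_l^{1,0}$, using $df(X,Y)=f([X,Y])$ as in Lemma \ref{d-annihil}. The Proposition asserting $[\tilde{\mathfrak g}^l,\tilde{\mathfrak g}^l]\subseteq\tilde{\mathfrak g}^{l+1}$, together with the fact that $f$ annihilates $\tilde{\mathfrak g}^{l+1}$, gives $df|_{\tilde{\mathfrak g}^l\times\tilde{\mathfrak g}^l}=0$; and for $X\in(\mathfrak{g}^l)^{\mathbb C}$, $Y\in\mathfrak{g}^{\mathbb C}$ one has $[X,Y]\in(\mathfrak{g}^{l+1})^{\mathbb C}\subseteq\tilde{\mathfrak g}^{l+1}$, so $\iota_X df=0$. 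The linear-algebra ingredient then yields $df\in(\tilde{\mathfrak g}^l)^{ann}\wedge((\mathfrak{g}^l)^{\mathbb C})^{ann}=V_{l-1}^{1,0}\wedge(V_{l-1}\mathfrak{g}^*)^{\mathbb C}$, which is (\ref{d-J-property}).

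The only real content is the linear-algebra ingredient, and the single pitfall is its exact formulation: once $\mathfrak{c}$ has codimension at least $2$ in $\mathfrak{b}$, the space $\mathfrak{b}^{ann}\wedge\mathfrak{c}^{ann}$ sits strictly inside $\{\omega:\omega(\mathfrak{b},\mathfrak{c})=0\}$, so both the vanishing of $df$ on $\tilde{\mathfrak g}^l\times\tilde{\mathfrak g}^l$ and the stronger contraction vanishing along $(\mathfrak{g}^l)^{\mathbb C}$ are genuinely used. Everything else is bookkeeping with the identifications already supplied above; in particular the preceding Corollary $dV_l^{1,0}\subseteq V_{l-1}^{1,0}\wedge(\mathfrak{g}^{\mathbb C})^*$ is recovered as the case $\mathfrak{c}=\{0\}$.
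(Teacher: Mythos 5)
Your proof is correct and is essentially the paper's argument in dual form: your two vanishing conditions ($df$ restricted to $\tilde{\mathfrak g}^l\times\tilde{\mathfrak g}^l$ vanishes, and $\iota_X\,df=0$ for $X\in(\mathfrak{g}^l)^{\mathbb C}$) are precisely the two containments $dV_l^{1,0}\subset V_{l-1}^{1,0}\wedge(\mathfrak{g}^{\mathbb C})^*$ and $dV_l^{1,0}\subset\Lambda^2\bigl((V_{l-1}\mathfrak{g}^*)^{\mathbb C}\bigr)$ that the paper intersects. Your adapted-basis computation is exactly the verification of $(A\wedge W)\cap\Lambda^2(B)=A\wedge B$ for $A\subset B$ that the paper's phrase ``the intersection \dots gives the answer'' leaves implicit, so there is no gap.
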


\begin{proof}
$V_l^{1,0}=(V_l\mathfrak{g}^*)^{\mathbb C}
\cap \Lambda^{1{,}0}$ is a subspace in $(V_l\mathfrak{g}^*)^{\mathbb C}$ and hence
$$
d V_l^{1,0} \subset d(V_l\mathfrak{g}^*)^{\mathbb C} \subset  (V_{l{-}1}\mathfrak{g}^*)^{\mathbb C} \wedge (V_{l{-}1}\mathfrak{g}^*)^{\mathbb C}.
$$
The intersection of two subspaces $V_{l{-}1}^{1,0}\wedge (\mathfrak{g}^{\mathbb C})^*$ and
$(V_{l{-}1}\mathfrak{g}^*)^{\mathbb C} \wedge (V_{l{-}1}\mathfrak{g}^*)^{\mathbb C}$ gives the answer.
\end{proof}

\begin{theorem}[\cite{Sal}]
A real nilpotent  $2n$-dimensional Lie algebra $\mathfrak{g}$ admits
an integrable complex structure if and only if
$(\mathfrak{g}^{\mathbb C})^*$ has a basis
$\{\omega^1, \dots \omega^n, \bar \omega^1, \dots \bar \omega^n \}$
such that
$$d\omega^{l+1} \in I(\omega^1, \dots \omega^l), \; l=0, \dots, n{-}1,$$
where
$I(\omega^1, \dots \omega^l)$ is an ideal in
$\Lambda^*((\mathfrak{g}^{\mathbb C})^*)$ generated by
$\omega^1, \dots \omega^l$.
\end{theorem}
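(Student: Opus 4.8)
The plan is to prove the two implications separately, in both cases playing off the reformulation of the Nijenhuis condition in terms of the bigrading of $\Lambda^{*}((\mathfrak{g}^{\mathbb C})^{*})$ --- that $J$ is integrable if and only if $d\Lambda^{1,0}\subset\Lambda^{2,0}\oplus\Lambda^{1,1}$, the first of the three reformulations listed after Corollary \ref{d-annihilator} --- against the flag $V_{0}^{1,0}\subset V_{1}^{1,0}\subset\dots\subset V_{s(\mathfrak{g})}^{1,0}=\Lambda^{1,0}$ together with the differential estimate $dV_{l}^{1,0}\subset V_{l-1}^{1,0}\wedge(\mathfrak{g}^{\mathbb C})^{*}$, which is a consequence of Lemma \ref{d-J-model}.

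For the ``only if'' direction, assume $J$ is an integrable complex structure on the nilpotent algebra $\mathfrak{g}$ and put $n_{l}=\dim_{\mathbb C}V_{l}^{1,0}$, so $0=n_{0}\le n_{1}\le\dots\le n_{s(\mathfrak{g})}=n$; the equality at the top is exactly where nilpotency enters. Pick a basis $\omega^{1},\dots,\omega^{n}$ of $\Lambda^{1,0}$ subordinate to the flag, i.e. with $\omega^{1},\dots,\omega^{n_{l}}$ spanning $V_{l}^{1,0}$ for each $l$. If $n_{l-1}<j\le n_{l}$ then $\omega^{j}\in V_{l}^{1,0}$, so $d\omega^{j}\in V_{l-1}^{1,0}\wedge(\mathfrak{g}^{\mathbb C})^{*}=\mathrm{span}(\omega^{1},\dots,\omega^{n_{l-1}})\wedge(\mathfrak{g}^{\mathbb C})^{*}\subset I(\omega^{1},\dots,\omega^{j-1})$, since $n_{l-1}\le j-1$. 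Relabelling $j=l+1$ yields the asserted relations, and setting $\bar\omega^{j}=\overline{\omega^{j}}$ (legitimate because $\Lambda^{0,1}=\overline{\Lambda^{1,0}}$) completes $\{\omega^{j}\}$ to a basis of $(\mathfrak{g}^{\mathbb C})^{*}$ of the required form.

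For the ``if'' direction, given such a basis, set $\Lambda^{1,0}:=\mathrm{span}_{\mathbb C}(\omega^{1},\dots,\omega^{n})$. The operator on $(\mathfrak{g}^{\mathbb C})^{*}$ equal to $i$ on every $\omega^{j}$ and $-i$ on every $\bar\omega^{j}$ squares to $-1$ and commutes with complex conjugation, which interchanges its two eigenspaces; hence it restricts to a real endomorphism of $\mathfrak{g}^{*}$ of square $-1$, whose transpose is an almost complex structure $J$ on $\mathfrak{g}$ having $\Lambda^{1,0}$ as its space of $(1,0)$-forms. By the quoted reformulation, integrability amounts to $d\Lambda^{1,0}\subset\Lambda^{2,0}\oplus\Lambda^{1,1}$. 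But a degree-two element of $I(\omega^{1},\dots,\omega^{l})$ is a sum $\sum_{i\le l}\omega^{i}\wedge\beta_{i}$ with $\beta_{i}\in(\mathfrak{g}^{\mathbb C})^{*}$, hence lies in $\Lambda^{1,0}\wedge(\mathfrak{g}^{\mathbb C})^{*}=\Lambda^{2,0}\oplus\Lambda^{1,1}$; so the hypothesis $d\omega^{l+1}\in I(\omega^{1},\dots,\omega^{l})$ for all $l$ forces $d\Lambda^{1,0}\subset\Lambda^{2,0}\oplus\Lambda^{1,1}$, and $J$ is an integrable complex structure.

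The dimension bookkeeping and the description of degree-two members of the ideal $I(\omega^{1},\dots,\omega^{l})$ are routine. The points carrying real weight are two: nilpotency of $\mathfrak{g}$, which is exactly what guarantees the chain $V_{l}^{1,0}$ exhausts $\Lambda^{1,0}$ at the finite stage $s(\mathfrak{g})$ so that a subordinate basis is a basis of all of $\Lambda^{1,0}$ and not merely of a proper subspace; and the estimate $dV_{l}^{1,0}\subset V_{l-1}^{1,0}\wedge(\mathfrak{g}^{\mathbb C})^{*}$, whose left-hand subspace being spanned by the basis vectors of index $\le n_{l-1}$ is precisely what produces the strict triangularity $d\omega^{j}\in I(\omega^{1},\dots,\omega^{j-1})$. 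I expect the construction and verification in the ``if'' direction to be the easier half, and the careful choice of the subordinate basis in the ``only if'' direction --- together with recognising why nilpotency is indispensable there --- to be the crux.
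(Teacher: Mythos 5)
Your proof is correct, and it is assembled from exactly the machinery the paper develops immediately before stating this theorem: the flag $V_0^{1,0}\subset\dots\subset V_{s(\mathfrak{g})}^{1,0}=\Lambda^{1,0}$ (whose exhaustion of $\Lambda^{1,0}$ is where nilpotency enters, as you correctly isolate), the containment $dV_l^{1,0}\subset V_{l-1}^{1,0}\wedge(\mathfrak{g}^{\mathbb C})^*$ for the forward direction, and the reformulation of integrability as $d\Lambda^{1,0}\subset\Lambda^{2,0}\oplus\Lambda^{1,1}$ for the converse. The paper itself states the theorem without proof, citing Salamon, so there is nothing to compare against line by line; your argument is the intended one.
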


\begin{corollary}[\cite{Sal}]
A nilpotent Lie algebra $\mathfrak{g}$ admits
an abelian complex structure if and only if
$(\mathfrak{g}^*)^{\mathbb C}$ has a basis
$\{\omega^1, \dots \omega^n, \bar \omega^1, \dots \bar \omega^n \}$
such that
$$d\omega^{i+1} \in \Lambda^{1,1}(\omega^1, \dots \omega^i,
\bar \omega^1, \dots \bar \omega^i), \; i=0, \dots, n{-}1,$$
\end{corollary}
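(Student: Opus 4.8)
The plan is to deduce the corollary from two ingredients already in hand. The first is reformulation~(2) of the integrability conditions, namely that an almost complex structure $J$ is abelian if and only if $d\Lambda^{1,0}\subset\Lambda^{1,1}$ (and an abelian structure is automatically integrable). The second is the observation that for an \emph{abelian} complex structure on a nilpotent $\mathfrak{g}$ there is a flag of $J$-invariant subspaces of $\mathfrak{g}^*$ along which one can refine $V_1\mathfrak{g}^*\subset V_2\mathfrak{g}^*\subset\cdots$, namely the annihilators of the \emph{ascending} central series. Given these, the ``if'' implication is immediate from the first ingredient, and the ``only if'' implication becomes the usual business of choosing a basis of $\Lambda^{1,0}$ adapted to a filtration, exactly as in Salamon's theorem above.

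For the ``if'' implication, suppose $(\mathfrak{g}^*)^{\mathbb C}$ carries a basis $\{\omega^1,\dots,\omega^n,\bar\omega^1,\dots,\bar\omega^n\}$ with $\bar\omega^j=\overline{\omega^j}$ and $d\omega^{i+1}\in\Lambda^{1,1}(\omega^1,\dots,\omega^i,\bar\omega^1,\dots,\bar\omega^i)$ for $i=0,\dots,n-1$. Define an endomorphism $J$ of $(\mathfrak{g}^{\mathbb C})^*$ by $J\omega^j=i\omega^j$ and $J\bar\omega^j=-i\bar\omega^j$. Since the $\bar\omega^j$ are the conjugates of the $\omega^j$, this $J$ commutes with complex conjugation, hence is the complexification of a real endomorphism of $\mathfrak{g}^*$ with square $-1$; dualizing, we obtain an almost complex structure on $\mathfrak{g}$ whose space of $(1,0)$-forms is $\mathrm{span}(\omega^1,\dots,\omega^n)$. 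Each $d\omega^j$ lies in $\Lambda^{1,1}$, so $d\Lambda^{1,0}\subset\Lambda^{1,1}$, and by reformulation~(2) this $J$ is an abelian complex structure on $\mathfrak{g}$.

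For the ``only if'' implication, let $J$ be an abelian complex structure on a nilpotent $\mathfrak{g}$. The crucial point is that $J$ preserves the centre: if $Z\in Z(\mathfrak{g})$, then for any $W\in\mathfrak{g}$ the abelian identity~(\ref{abelian}) applied to $Z$ and $-JW$ gives $[JZ,W]=[JZ,J(-JW)]=[Z,-JW]=-[Z,JW]=0$, so $JZ\in Z(\mathfrak{g})$. Since a $J$-invariant ideal $\mathfrak{a}$ induces an abelian complex structure on $\mathfrak{g}/\mathfrak{a}$, induction over the quotients $\mathfrak{g}/\mathfrak{g}_k$ shows that every term of the ascending central series $\{0\}=\mathfrak{g}_0\subset\mathfrak{g}_1\subset\cdots\subset\mathfrak{g}_N=\mathfrak{g}$ (which reaches $\mathfrak{g}$ because $\mathfrak{g}$ is nilpotent) is a $J$-invariant ideal. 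Put $U_j=(\mathfrak{g}_{N-j})^{ann}$; this is an increasing flag of $J$-invariant subspaces of $\mathfrak{g}^*$ with $U_0=\{0\}$ and $U_N=\mathfrak{g}^*$. From $[\mathfrak{g},\mathfrak{g}_k]\subset\mathfrak{g}_{k-1}$ together with $df(X,Y)=f([X,Y])$ one sees that for $f\in U_j$ the $2$-form $df$ vanishes whenever one of its arguments lies in $\mathfrak{g}_{N-j+1}$, i.e.\ $dU_j\subset\Lambda^2(\mathfrak{g}_{N-j+1})^{ann}=\Lambda^2 U_{j-1}$. Complexifying and using $J$-invariance to split $U_j^{\mathbb C}=U_j^{1,0}\oplus\overline{U_j^{1,0}}$ with $U_j^{1,0}=U_j^{\mathbb C}\cap\Lambda^{1,0}$, we get $dU_j^{1,0}\subset\Lambda^2(U_{j-1}^{1,0}\oplus\overline{U_{j-1}^{1,0}})$; intersecting with $\Lambda^{1,1}$ — permissible precisely because $J$ is abelian — yields $dU_j^{1,0}\subset U_{j-1}^{1,0}\wedge\overline{U_{j-1}^{1,0}}$. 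Now choose a basis $\omega^1,\dots,\omega^n$ of $\Lambda^{1,0}$ adapted to the flag $\{0\}=U_0^{1,0}\subset\cdots\subset U_N^{1,0}=\Lambda^{1,0}$ and set $\bar\omega^j=\overline{\omega^j}$: if $m$ is minimal with $\omega^j\in U_m^{1,0}$, then $\dim U_{m-1}^{1,0}\le j-1$, whence $d\omega^j\in dU_m^{1,0}\subset U_{m-1}^{1,0}\wedge\overline{U_{m-1}^{1,0}}\subset\Lambda^{1,1}(\omega^1,\dots,\omega^{j-1},\bar\omega^1,\dots,\bar\omega^{j-1})$, which is the assertion with $i=j-1$.

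The only non-routine step is the $J$-invariance of the ascending central series (equivalently, of the centre) for an abelian $J$; the analogous statements fail for the \emph{descending} central series and for non-abelian complex structures. This is exactly why Lemma~\ref{d-J-model} alone is not enough here: combined with $d\Lambda^{1,0}\subset\Lambda^{1,1}$ it controls only the holomorphic indices appearing in $d\omega^j$ and leaves the antiholomorphic ones uncontrolled, whereas the $J$-invariant flag controls both. Beyond that point the argument is the same elementary filtration bookkeeping that underlies Salamon's theorem above.
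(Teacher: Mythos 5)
Your argument is correct, but note that the paper itself gives no proof of this corollary: it is stated as a citation of Salamon, immediately after the analogous theorem for general integrable structures, and the filtration machinery the paper does develop is built on the \emph{descending} central series (the subspaces $V_l\mathfrak{g}^*$ and $V_l^{1,0}$, Lemma \ref{d-J-model}). Your route is genuinely different from that machinery and, as you correctly observe, necessarily so: Lemma \ref{d-J-model} combined with $d\Lambda^{1,0}\subset\Lambda^{1,1}$ forces the holomorphic factor of each term of $d\omega^j$ into $V_{l-1}^{1,0}$ but only forces the antiholomorphic factor into $(V_{l-1}\mathfrak{g}^*)^{\mathbb C}$, whose $(0,1)$-projection need not lie in $V_{l-1}^{0,1}$, so the required triangularity in the barred indices does not follow. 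Your substitute -- the computation $[JZ,W]=[JZ,J(-JW)]=[Z,-JW]=0$ from (\ref{abelian}) showing that $J$ preserves the centre, hence by induction the whole ascending central series, which then yields a $J$-invariant flag $U_j$ with $dU_j^{1,0}\subset U_{j-1}^{1,0}\wedge\overline{U_{j-1}^{1,0}}$ -- is exactly the ingredient that controls both factors at once, and the adapted-basis bookkeeping and the converse direction are both handled correctly. What this buys, beyond filling the gap the paper leaves to the literature, is an explanation of \emph{why} the abelian case admits the stronger $\Lambda^{1,1}$ normal form while the general integrable case does not: the relevant invariant flag exists only when $J$ is abelian.
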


\begin{example}
\label{abelian_long}
Define a graded Lie algebra
$\mathfrak{B}(n)=\oplus_{l=1}^{n} \mathfrak{B}_{l}$ such that:
$$\dim \mathfrak{B}_{l}= 2, \; l=1,2,\dots,n.$$
Let $x_{l}, y_{l}$ denote basic elements in
$\mathfrak{B}_{l}$ for $l=1,\dots, n$. Then
a Lie algebra structure of $\mathfrak{B}(n)$ is defined by:
$$
[x_1,y_1]=y_2, \;[x_1,x_l]=[y_1,y_l]=x_{l{+}1},
\; [x_1,y_l]=[x_l,y_1]=y_{l{+}1},
\;\;
l=2,3, \dots, n{-}1.
$$
One can define an abelian
complex structure $J$:
$$Jx_l=-y_l, \;Jy_{l}=x_l, \; l=1,2,\dots,n.$$

Taking $1$-forms $\omega^{l}=x^l+iy^{l}, \; l=1,2,\dots$
we have
\begin{equation}
\begin{split}
d\omega^1=0, \; d\omega^2=\frac{1}{2}\bar \omega^1 {\wedge} \omega^1, 
d\omega^3=\bar \omega^1 {\wedge} \omega^2, \dots,
d\omega^{n}=\bar \omega^1 {\wedge} \omega^{n{-}1}.
\end{split}
\end{equation}
Obviously $\mathfrak{B}(n)$ is $n$-step nilpotent Lie algebra and real $1$-forms $x^1, y^1, x^2$ are closed 
and $\dim{H^1(\mathfrak{B}(n))}=b_1(\mathfrak{B}(n))=3$.
\end{example}

\begin{remark}
A complex structure $J$ on a nilpotent Lie algebra
$\mathfrak{g}$ corresponding to a basis
$\{\omega^1, \dots \omega^n, \bar \omega^1, \dots \bar \omega^n \}$
of $\mathfrak{g}^*$ such that
$$d\omega^{i+1} \in \Lambda^2(\omega^1, \dots \omega^i,
\bar \omega^1, \dots \bar \omega^i), \; i=0, \dots, n{-}1,$$
was called in \cite{CFGU} a nilpotent complex structure.
\end{remark}
One can 
give an invariant definition of nilpotent complex structure.

\begin{definition}[\cite{Sal}]
An almost complex structure $J$ on a Lie algebra
$\mathfrak{g}$ is called (integrable) nilpotent complex structure
if for all $l=1,\dots, s(\mathfrak{g}),$
$$d V_l^{1,0} \subset V_{l{-}1}^{1,0}\wedge (V_{l{-}1}^{1,0}
\oplus V_{l{-}1}^{0,1}).$$
\end{definition}

\begin{proposition}
A nilpotent $2n$-dimensional Lie algebra $\mathfrak{g}$ admits
a complex Lie algebra structure if and only if
$(\mathfrak{g}^*)^{\mathbb C}$ has a basis
$\{\omega^1, \dots \omega^n, \bar \omega^1, \dots \bar \omega^n \}$
such that
$$d\omega^{i+1} \in \Lambda^2(\omega^1, \dots \omega^i),
\; i=0, \dots, n{-}1.$$
\end{proposition}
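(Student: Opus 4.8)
The plan is to characterize the complex Lie algebra condition \eqref{complexLie} in terms of the minimal model, proceeding exactly in parallel with the proof of the analogous statements for integrable and abelian complex structures. Recall that condition \eqref{complexLie}, i.e. $[JX,Y]=J[X,Y]$ for all $X,Y$, was shown above to be equivalent to $d\Lambda^{1,0} \subset \Lambda^{2,0}$; so the eigenspaces $\mathfrak{g}^{\mathbb C}_{\pm i}$ are ideals and $(\mathfrak{g},J)$ is genuinely a complex Lie algebra. The first half of the argument is the ``only if'' direction. Here I would invoke Lemma \ref{d-J-model}: for a complex structure $J$ we always have $dV_l^{1,0} \subset V_{l-1}^{1,0}\wedge (V_{l-1}\mathfrak{g}^*)^{\mathbb C}$. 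When moreover $d\Lambda^{1,0}\subset \Lambda^{2,0}$, intersecting the right-hand side with $\Lambda^{2,0} = \Lambda^{1,0}\wedge\Lambda^{1,0}$ and noting $V_{l-1}^{1,0} = (V_{l-1}\mathfrak{g}^*)^{\mathbb C}\cap \Lambda^{1,0}$ gives the sharper inclusion
$$
dV_l^{1,0} \subset V_{l-1}^{1,0}\wedge V_{l-1}^{1,0}.
$$
Since $\{0\}=V_0^{1,0}\subset V_1^{1,0}\subset \cdots \subset V_{s(\mathfrak g)}^{1,0}=\Lambda^{1,0}$ is a flag of complex subspaces of $\Lambda^{1,0}$ exhausting it, I can pick a basis $\omega^1,\dots,\omega^n$ of $\Lambda^{1,0}$ adapted to this flag, so that $\omega^1,\dots,\omega^{\dim V_l^{1,0}}$ span $V_l^{1,0}$; relabeling indices, the displayed inclusion becomes $d\omega^{i+1}\in \Lambda^2(\omega^1,\dots,\omega^i)$, and together with their conjugates $\bar\omega^1,\dots,\bar\omega^n$ these give the desired basis of $(\mathfrak{g}^*)^{\mathbb C}$.

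For the ``if'' direction I would run the implications backwards. Assume $(\mathfrak{g}^*)^{\mathbb C}$ has a basis $\{\omega^1,\dots,\omega^n,\bar\omega^1,\dots,\bar\omega^n\}$ with $d\omega^{i+1}\in \Lambda^2(\omega^1,\dots,\omega^i)$. Define $J$ by declaring $\Lambda^{1,0}=\mathrm{span}(\omega^1,\dots,\omega^n)$ and $\Lambda^{0,1}=\mathrm{span}(\bar\omega^1,\dots,\bar\omega^n)$ to be its $\pm i$ eigenspaces on $(\mathfrak{g}^*)^{\mathbb C}$; one must check this is the complexification of a genuine real endomorphism $J$ of $\mathfrak{g}^*$ (hence of $\mathfrak{g}$) with $J^2=-1$, which follows from the fact that the two spans are complex-conjugate to one another. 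By construction $d\Lambda^{1,0}\subset \Lambda^2(\omega^1,\dots,\omega^n)=\Lambda^{2,0}$, which by the reformulation ``3)'' above is precisely condition \eqref{complexLie}. In particular $d\Lambda^{1,0}\subset \Lambda^{2,0}\oplus\Lambda^{1,1}$, so $J$ is integrable; and the chain of subspaces $V_l^{1,0}$ one reads off from the filtration by the $\omega^i$ (which need not coincide a priori with the span of the first few $\omega$'s, but is refined by it) shows $V_{s}^{1,0}=\Lambda^{1,0}$ for $s=n$ large enough, giving nilpotency — alternatively one observes directly that $d\omega^{i+1}\in\Lambda^2(\omega^1,\dots,\omega^i)$ forces the descending central series to terminate.

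The main obstacle, as in the related propositions of \cite{Sal}, is the bookkeeping in the ``if'' direction: one must verify that prescribing a splitting $(\mathfrak{g}^*)^{\mathbb C}=\Lambda^{1,0}\oplus\Lambda^{0,1}$ by a complex-conjugate pair of subspaces really does descend to a well-defined real almost complex structure on $\mathfrak{g}$, and that the staircase condition on $d\omega^{i+1}$ is enough to guarantee that the resulting $\mathfrak{g}$ is nilpotent (not merely that $d^2=0$ holds, which is automatic). Both points are standard — the first is the usual identification of almost complex structures with their $(1,0)$-spaces, and the second is handled by the $V_l\mathfrak{g}^*$ machinery developed in Section 1 — but they are what make the statement a genuine ``if and only if'' rather than a one-line corollary of the reformulation ``3)''. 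Everything else is a transcription of the proofs already given for the integrable and abelian cases, replacing $\Lambda^{2,0}\oplus\Lambda^{1,1}$ (resp. $\Lambda^{1,1}$) by $\Lambda^{2,0}$.
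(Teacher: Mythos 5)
The paper states this proposition without proof, so there is nothing to compare against; judged on its own, your argument is correct and is precisely the proof the paper's framework suggests (reformulation 3), i.e.\ $d\Lambda^{1,0}\subset\Lambda^{2,0}$, combined with the filtration $V_l^{1,0}$ and a flag-adapted basis), and your key linear-algebra step $\left(V_{l-1}^{1,0}\wedge(V_{l-1}\mathfrak{g}^*)^{\mathbb C}\right)\cap\Lambda^{2,0}=\Lambda^2\left(V_{l-1}^{1,0}\right)$ is valid because $V_{l-1}^{1,0}=(V_{l-1}\mathfrak{g}^*)^{\mathbb C}\cap\Lambda^{1,0}$. The only superfluous point is the worry about establishing nilpotency in the ``if'' direction: nilpotency of $\mathfrak{g}$ is a hypothesis of the proposition, not a conclusion.
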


Obviously an abelian complex structure and a complex Lie algebra structure are examples
of nilpotent complex structures. However there  are examples of nilpotent Lie algebras that admit only non-nilpotent complex structures.

\begin{example}[S.~Salamon, \cite{Sal}]
\label{ex_{6,8}}
Let us consider a $6$-dimensional Lie algebra $\mathfrak{g}_{6,8}$
defined by its basis $e_1, \dots, e_6$ and structure relations:
$$
[e_1,e_2]=e_3, \:[e_1,e_3]=e_4, \:[e_2,e_3]=e_5, \:[e_1,e_4]=[e_2,e_5]=e_6.
$$
One can verify that an almost complex structure $J$ on $\mathfrak{g}_{6,8}$ defined by 
$$
Je_1=-e_2, Je_2=e_1, \; Je_4=-e_5, Je_5=e_4, \; Je_3=e_6, Je_6=-e_3,
$$
is integrable and non-abelian.

Proceeding to the dual picture (taking the dual basis $e^1, e^2, \dots, e^6$ of $\mathfrak{g}_{6,8}^*$)
we have the following relations for the differentials of basic forms:
$$
de^1=0, \:de^2=0, \: de^3=e^1{\wedge}e^2,
\: de^4=e^1{\wedge}e^3, \:de^5=e^2{\wedge}e^3,
\:de^6=e^1{\wedge}e^4+e^2{\wedge}e^5.
$$

Now let us consider the complexification $(\mathfrak{g}_{6,8}^{\mathbb C})^*$ and complex forms 
$$
\omega^1=e^1+ie^2,\:\omega^2=e^4+ie^5, \:\omega^3=e^6+ie^3.
$$
They are a basis of holomorphic forms $\Lambda^{1,0}(\mathfrak{g}_{6,8}^{\mathbb C})^*$.

Now one can verify the formulas
$$
d\omega^1=0, \; d\omega^2=
\omega^1{\wedge}\frac{i}{2}(\bar \omega^3{-}\omega^3), \:
d\omega^3=\frac{1}{2}(\omega^1{\wedge}\bar \omega^2+
\bar \omega^1 {\wedge} \omega^2)-
\frac{1}{2}\omega^1{\wedge}\bar \omega^1.
$$
\end{example}

One can generalize this example for an arbitrary even dimension $\dim{\mathfrak{g}} \ge 6$.
\begin{example}
A Lie algebra
$\mathfrak{C}(n{+}1), n \ge 3,$:
let $w_2, w_{n+1}, x_{l}, y_{l}, l=1,3,4, {\dots}, n,$ be  its basis.
Structure constants are defined by:
\begin{equation}
\begin{split}
[x_1,y_1]{=}z_2, [x_1,w_2]{=}x_3, [y_1,w_2]{=}y_3, 
[x_1,x_n]{=}[y_1,y_n]{=}w_{n{+}1},\\
[x_1,x_l]{=}[y_1,y_l]{=}x_{l{+}1},
[y_1,x_l]{=}[x_1,y_l]{=}y_{l{+}1},\;
l=3, {\dots}, n{-}1.
\end{split}
\end{equation}
We define a
complex structure $J$:
$$Jw_2=w_{n{+}1}, Jw_{n{+}1}=-w_2, Jx_l=-y_l, \;Jy_{l}=x_l, \; l=1,3,4,\dots,n.$$

Taking $1$-forms $\omega^2=w^{n{+}1}+iw^2, \omega^{l}=x^l+iy^{l}, \; l=1,3,4,\dots,n,$
we have
\begin{equation}
\begin{split}
d\omega^1=0, \; d\omega^2=\omega^1 {\wedge}\frac{i}{2} \left(\bar \omega^{n{+}1}{-}\omega^{n{+}1}\right), 
d\omega^3=\bar \omega^1 {\wedge} \omega^2, \dots,\\
d\omega^{n}=\bar \omega^1 {\wedge} \omega^{n{-}1},\;
d\omega^{n{+}1}=\frac{1}{2}\left(\omega^1 {\wedge} \bar \omega^{n}+\bar \omega^1 {\wedge}  \omega^{n}\right)-
\frac{1}{2} \omega^1\wedge\bar  \omega^1.
\end{split}
\end{equation}
\end{example}

\section{Integrable complex structures and algebraic constraints}

Consider again the decreasing sequence of subalgebras 
$\mathfrak{g}^k(J)$:
\begin{equation}
\label{J-invar}
\mathfrak{g}=\mathfrak{g}^1(J) \supset \mathfrak{g}^2(J) \supset \dots
\supset \mathfrak{g}^{s(\mathfrak{g})}(J) \supset \{0\}.
\end{equation}
We have already noted  that the first inclusion  in this sequence is strict, but this is not necessarily so for the other inclusions.

Recall the example \ref{ex_{6,8}} of the $6$-dimensional nilpotent Lie algebra $\mathfrak{g}_{6,8}$
endowed with the non-abelian complex structure $J$. $\mathfrak{g}_{6,8}$ is $4$-step nilpotent and one can easily remark that
$$
\mathfrak{g}_{6,8} \supset \mathfrak{g}_{6,8}^2(J) = \mathfrak{g}_{6,8}^3(J) 
\supset \mathfrak{g}_{6,8}^4(J) \supset \{0\},
$$
where
$$
\mathfrak{g}_{6,8}^2(J){=} Span (e_3,e_4,e_5,e_6){=} \mathfrak{g}_{6,8}^3(J), \;\;
 \mathfrak{g}_{6,8}^4(J){=}Span (e_5,e_6).
$$
Let $E$ denotes  the total number of equalities in the sequence (\ref{J-invar}). 

Obviously we have the following estimate:
\begin{equation}
\label{E1}
2\left(s(\mathfrak{g})-E \right) \le \dim{\mathfrak{g}},
\end{equation}
it follows from the fact that the dimension $\dim{\mathfrak{g}^{k}(J)}$  decreases at each strict inclusion at least by two.

\begin{proposition}
Let $\mathfrak{g}$ be a Lie algebra endowed by nilpotent complex structure. Then 
we have the following estimate on its nil-index:
\begin{equation}
\label{nilptotent_complex}
s(\mathfrak{g}) \le \frac{1}{2} \dim{\mathfrak{g}}.
\end{equation}
\end{proposition}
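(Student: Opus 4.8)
The plan is to read the bound off the increasing filtration $\{V_l^{1,0}\}$ of $\Lambda^{1,0}$. Since $\dim_{\mathbb C}\Lambda^{1,0}=n=\tfrac{1}{2}\dim\mathfrak{g}$ and
\[
\{0\}=V_0^{1,0}\subset V_1^{1,0}\subset\dots\subset V_{s(\mathfrak{g})}^{1,0}=\Lambda^{1,0},
\]
it suffices to prove that, for a \emph{nilpotent} complex structure, each of the $s(\mathfrak{g})$ inclusions in this chain is proper: a strictly increasing chain of subspaces of an $n$-dimensional space running from $\{0\}$ to the whole space has at most $n$ links, which gives exactly $s(\mathfrak{g})\le n$.

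Two elementary observations will be used. First, complexifying the definition of $V_l\mathfrak{g}^*$ shows that a complex $1$-form $g$ lies in $(V_l\mathfrak{g}^*)^{\mathbb C}$ \emph{if and only if} $dg\in\Lambda^2\bigl((V_{l-1}\mathfrak{g}^*)^{\mathbb C}\bigr)$ -- decompose $g$ into its real and imaginary parts and use that $d$ is a real operator. Second, $V_{l-1}^{1,0}\oplus V_{l-1}^{0,1}\subset(V_{l-1}\mathfrak{g}^*)^{\mathbb C}$: the first summand is $(V_{l-1}\mathfrak{g}^*)^{\mathbb C}\cap\Lambda^{1,0}$ by definition, and the second is its complex conjugate, which also lies in $(V_{l-1}\mathfrak{g}^*)^{\mathbb C}$ because this space is conjugation-stable.

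The heart of the argument is a propagation lemma: if $V_{l-1}^{1,0}=V_l^{1,0}$ for some $l\ge1$, then $V_l^{1,0}=V_{l+1}^{1,0}$. To prove it, apply the definition of nilpotent complex structure at level $l+1$, namely $dV_{l+1}^{1,0}\subset V_l^{1,0}\wedge\bigl(V_l^{1,0}\oplus V_l^{0,1}\bigr)$; the hypothesis (together with $V_l^{0,1}=\overline{V_l^{1,0}}=\overline{V_{l-1}^{1,0}}=V_{l-1}^{0,1}$) turns the right-hand side into $V_{l-1}^{1,0}\wedge\bigl(V_{l-1}^{1,0}\oplus V_{l-1}^{0,1}\bigr)$, which by the second observation is contained in $\Lambda^2\bigl((V_{l-1}\mathfrak{g}^*)^{\mathbb C}\bigr)$. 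By the first observation each $f\in V_{l+1}^{1,0}$ then lies in $(V_l\mathfrak{g}^*)^{\mathbb C}$, and being a $(1,0)$-form it lies in $(V_l\mathfrak{g}^*)^{\mathbb C}\cap\Lambda^{1,0}=V_l^{1,0}$. Now suppose some inclusion were not proper, say $V_{l-1}^{1,0}=V_l^{1,0}$ with $1\le l\le s(\mathfrak{g})$; iterating the lemma forces $V_{l-1}^{1,0}=V_l^{1,0}=\dots=V_{s(\mathfrak{g})}^{1,0}=\Lambda^{1,0}$. But $V_{l-1}^{1,0}$ is the annihilator of $\tilde{\mathfrak{g}}^{l}=\mathfrak{g}^{l}(J)^{\mathbb C}+\mathfrak{g}^{\mathbb C}_{-i}$ while $\Lambda^{1,0}$ is the annihilator of $\mathfrak{g}^{\mathbb C}_{-i}$, so $V_{l-1}^{1,0}=\Lambda^{1,0}$ forces $\mathfrak{g}^{l}(J)^{\mathbb C}\subset\mathfrak{g}^{\mathbb C}_{-i}$; since the left-hand side is conjugation-stable it also lies in $\mathfrak{g}^{\mathbb C}_{i}$, hence $\mathfrak{g}^l(J)=\{0\}$ and a fortiori $\mathfrak{g}^l=\{0\}$ -- impossible for $l\le s(\mathfrak{g})$. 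Thus all inclusions are proper and $s(\mathfrak{g})\le n=\tfrac{1}{2}\dim\mathfrak{g}$.

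The only genuinely delicate point is the propagation lemma, and it is there that the nilpotency of $J$ enters: for a general integrable complex structure one only knows $dV_l^{1,0}\subset V_{l-1}^{1,0}\wedge(\mathfrak{g}^{\mathbb C})^*$, and without control of the \emph{second} wedge factor an equality $V_{l-1}^{1,0}=V_l^{1,0}$ does not let one conclude $dV_{l+1}^{1,0}\subset\Lambda^2\bigl((V_{l-1}\mathfrak{g}^*)^{\mathbb C}\bigr)$. Everything else -- the dimension count and the identification of the annihilators involved -- follows immediately from the propositions established above.
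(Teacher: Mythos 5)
Your proof is correct and is essentially the paper's argument read on the dual side: the paper counts strict inclusions in the decreasing chain $\mathfrak{g}^k(J)$ via the estimate $2\left(s(\mathfrak{g})-E\right)\le\dim{\mathfrak{g}}$ with $E=0$, which is exactly your dimension count in the $n$-dimensional space $\Lambda^{1,0}$ along the annihilator filtration $V_l^{1,0}$. The only difference is that the paper merely asserts that all inclusions are strict for a nilpotent $J$, whereas your propagation lemma actually derives this from the definition --- a welcome filling-in of detail rather than a different route.
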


\begin{proof}
It follows that for a nilpotent complex structure $J$
in the decreasing sequence (\ref{J-invar}) 
all inclusions are strict, i.e. $E=0$.
The Example \ref{abelian_long} shows that our estimate on nil-index of nilpotent Lie algebras with nilpotent or abelian complex structures is sharp.

The estimate (\ref{nilptotent_complex}) of nil-index have not been discussed in \cite{CFGU}, although
it easily follows from the arguments there.
\end{proof}

\begin{remark}[\cite{CFGU}]
Another restriction imposed on the algebraic structure of $\mathfrak{g}$
by the existence of a nilpotent complex structure was the following estimate \cite{CFGU}:
$$
b_1(\mathfrak{g})=\dim{\mathfrak{g}} - \dim{[\mathfrak{g}, \mathfrak{g}]}\ge 3.
$$
This estimate immediately follows from the properties of
the canonical basis $\omega^1, \bar \omega^1, \dots, \omega^n, \bar \omega^n$
in $(\mathfrak{g}^*)^{\mathbb C}$. Real and imaginary parts of $\omega^1$ are
linearily independent closed forms. If $d\omega^2=0$ 
then it evidently follows that $b_1(\mathfrak{g})\ge 4$. 
If $d\omega^2  \ne 0$ one can choose
$\omega_2$ such that $d\omega^2=\omega^1 \wedge \bar \omega^1$, but
the $2$-form $\omega^1 \wedge \bar \omega^1$ is pure imaginary
and hence the real part of $\omega^2$ must to be closed, that
gives us at least three linearily independent closed real $1$-forms.
\end{remark}

The complex Lie algebra structure $J$ can be regarded as abelian complex structure as we have seen.
But in order to have sharp estimates we have to precise our estimates.
\begin{proposition}
If a real nilpotent Lie algebra $\mathfrak{g}$ admits
a complex Lie algebra structure, then
\begin{equation}
s(\mathfrak{g}) \le \frac{1}{2}\dim{\mathfrak{g}}-1, \;\;\;\;\;
b_1(\mathfrak{g})=\dim{\mathfrak{g}} - \dim{[\mathfrak{g}, \mathfrak{g}]}\ge 4.
\end{equation}
\end{proposition}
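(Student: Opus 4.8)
The plan is to exploit the fact that the condition (\ref{complexLie}) turns $\mathfrak{g}$ into a \emph{complex} Lie algebra $\mathfrak{h}=(\mathfrak{g},J)$ of complex dimension $n$, where $\dim_{\mathbb R}\mathfrak{g}=2n$, and to read off both inequalities from this complex picture. The first step I would record is that (\ref{complexLie}) forces every term $\mathfrak{g}^k$ of the descending central series to be $J$-invariant: indeed $J[\mathfrak{g},\mathfrak{g}]=[J\mathfrak{g},\mathfrak{g}]\subset[\mathfrak{g},\mathfrak{g}]$, and inductively $J\mathfrak{g}^{k}=J[\mathfrak{g},\mathfrak{g}^{k-1}]=[J\mathfrak{g},\mathfrak{g}^{k-1}]\subset\mathfrak{g}^{k}$. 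Hence each $\mathfrak{g}^k$ is a complex subspace, the $\mathbb R$-span and the $\mathbb C$-span of the iterated brackets coincide, and the chain $\mathfrak{g}=\mathfrak{g}^1\supset\mathfrak{g}^2\supset\cdots$ is exactly the descending central series of the complex Lie algebra $\mathfrak{h}$. In particular $s(\mathfrak{g})=s(\mathfrak{h})$.

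For the nil-index bound I would now repeat, over $\mathbb C$, the elementary counting argument of the first section. We may assume $\mathfrak{g}$ is non-abelian, the abelian case being trivial; then $\mathfrak{h}$ is non-abelian, so $a_1^{\mathbb C}(\mathfrak{h})=\dim_{\mathbb C}\mathfrak{h}/[\mathfrak{h},\mathfrak{h}]\ge 2$ and $a_i^{\mathbb C}(\mathfrak{h})\ge 1$ for $2\le i\le s(\mathfrak{h})$, whence
$$
n=\dim_{\mathbb C}\mathfrak{h}=\sum_{i=1}^{s(\mathfrak{h})}a_i^{\mathbb C}(\mathfrak{h})\ge 2+\bigl(s(\mathfrak{h})-1\bigr)=s(\mathfrak{h})+1 .
$$
Therefore $s(\mathfrak{g})=s(\mathfrak{h})\le n-1=\tfrac12\dim_{\mathbb R}\mathfrak{g}-1$, which is the claimed improvement by one over the generic bound (\ref{nilptotent_complex}).

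For the Betti number I would combine a parity observation with the estimate already available for nilpotent complex structures. A complex Lie algebra structure is in particular a nilpotent complex structure, so recall (the Remark above, from \cite{CFGU}) that $b_1(\mathfrak{g})=\dim_{\mathbb R}\mathfrak{g}-\dim_{\mathbb R}[\mathfrak{g},\mathfrak{g}]\ge 3$. On the other hand $[\mathfrak{g},\mathfrak{g}]$ is $J$-invariant by the first step, hence a complex subspace, so $\dim_{\mathbb R}[\mathfrak{g},\mathfrak{g}]$ is even; since $\dim_{\mathbb R}\mathfrak{g}=2n$ is even as well, $b_1(\mathfrak{g})$ is even, and an even integer $\ge 3$ is $\ge 4$. (Equivalently one could argue on the dual side: from the canonical basis $\{\omega^1,\dots,\omega^n\}$ of $\Lambda^{1,0}$ one has $d\omega^1=0$ and $d\omega^2\in\Lambda^2(\omega^1)=\{0\}$, so $\omega^1$ and $\omega^2$ are both closed and their real and imaginary parts give four $\mathbb R$-linearly independent closed $1$-forms.)

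There is no genuine obstacle here: the whole content is the dictionary between the real Lie algebra $\mathfrak{g}$ and the complex Lie algebra $\mathfrak{h}=(\mathfrak{g},J)$. The one point that deserves a line of care — and which is what produces the sharpening by one — is the verification that the descending central series is $J$-stable, so that it may legitimately be regarded as the descending central series of $\mathfrak{h}$ and the over-$\mathbb C$ estimate $s(\mathfrak{h})\le\dim_{\mathbb C}\mathfrak{h}-1$ becomes available.
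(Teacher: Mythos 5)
Your proof is correct and follows essentially the same route as the paper: regard $(\mathfrak{g},J)$ as an $n$-dimensional complex Lie algebra, apply the bound $s\le\dim_{\mathbb C}-1$ there, and observe that $H^1$ carries a complex structure so its real dimension is even and at least $4$. You merely make explicit two points the paper leaves implicit — the $J$-stability of the descending central series and the parity argument for $b_1$ — which is a welcome amplification rather than a different method.
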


\begin{proof} $2n$-dimensional real Lie algebra
$\mathfrak{g}$ can be regarded as a complex $n$-dimensional Lie algebra
and hence 
$$s(\mathfrak{g}) \le n-1=\frac{1}{2}\dim{\mathfrak{g}}-1.$$
Its first cohomology groupe $H^1(\mathfrak{g})$ as a complex space has dimension at least $2$.
Hence
 $\dim{\mathfrak{g}} - \dim{[\mathfrak{g}, \mathfrak{g}]}=b_1(\mathfrak{g}) \ge 4$.
The Lie algebra $\mathfrak{m}_0^{\mathbb R}(n)$
from Example \ref{compl_filif} shows that these estimates are sharp.
\end{proof}

As we have already noticed, the nil-index $s(\mathfrak{g})$ of even-dimesional real nilpotent Lie algebra 
$\mathfrak{g}$
 can exceed  the value $\frac{1}{2}\dim{\mathfrak{g}}$ if we have positive number $E >0$ of equalities in the sequence (\ref{J-invar}).

\begin{lemma}
\label{mainlemma}
Let $k \ge 2$ and
\begin{equation}
\label{jump_cond}
\mathfrak{g}^{k{-}1}(J) \supset \mathfrak{g}^k(J)=\mathfrak{g}^{k{+}1}(J)=\dots=\mathfrak{g}^{k{+}p}(J) \supset
\mathfrak{g}^{k{+}p{+}1}(J), \; p \ge 1,
\end{equation}
where the first and the last inclusions are strict.  
Then  
$$
\dim{\mathfrak{g}^{k{+}p{+}1}} {-} \dim{\mathfrak{g}^{k{+}p{+}2}}\ge 2.
$$
\end{lemma}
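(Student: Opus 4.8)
The plan is to exploit the structure constant $\mathfrak{g}^k(J)=\mathfrak{g}^{k+p}(J)$ together with Salamon's commutator inclusion (Proposition \ref{l-commut}) to pin down where the bracket $[\mathfrak{g},\mathfrak{g}^{k+p}]$ lives, and then compare this with the genuine descending central series. Write $\mathfrak{h}=\mathfrak{g}^k(J)=\dots=\mathfrak{g}^{k+p}(J)$. By Proposition \ref{l-commut} we have $[\mathfrak{h},\mathfrak{h}]\subset\mathfrak{g}^{k+1}(J)=\mathfrak{h}$, so $\mathfrak{h}$ is a subalgebra; but more importantly, since $\mathfrak{g}^{k+p}\subset\mathfrak{h}$, we get $[\mathfrak{g}^{k+p},\mathfrak{g}^{k+p}]\subset\mathfrak{g}^{k+p}(J)=\mathfrak{g}^{k+p+1}(J)$. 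Also the ordinary bracket gives $\mathfrak{g}^{k+p+1}=[\mathfrak{g},\mathfrak{g}^{k+p}]\subset\mathfrak{g}^{k+p}(J)=\mathfrak{g}^{k+p+1}(J)$, which combined with $J$-invariance of $\mathfrak{g}^{k+p+1}(J)$ shows $\mathfrak{g}^{k+p}(J)=\mathfrak{g}^{k+p+1}(J)=\mathfrak{g}^{k+p+1}+J\mathfrak{g}^{k+p+1}$; so $\mathfrak{g}^{k+p+1}(J)$ is exactly the $J$-hull of $\mathfrak{g}^{k+p+1}$, and by hypothesis it strictly contains $\mathfrak{g}^{k+p+2}(J)$.

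Next I would argue by contradiction: suppose $\dim\mathfrak{g}^{k+p+1}-\dim\mathfrak{g}^{k+p+2}\le 1$. If this difference is $0$ then $\mathfrak{g}^{k+p+1}=\mathfrak{g}^{k+p+2}$, forcing $\mathfrak{g}^{k+p+1}=0$ by nilpotency, hence $\mathfrak{g}^{k+p}(J)=0$, contradicting the strict inclusion $\mathfrak{g}^{k-1}(J)\supset\mathfrak{g}^k(J)=\mathfrak{h}$ (which is nonzero since $k\le s(\mathfrak{g})$). So the difference is exactly $1$: pick $v\in\mathfrak{g}^{k+p+1}$ with $\mathfrak{g}^{k+p+1}=\mathbb{R}v\oplus\mathfrak{g}^{k+p+2}$. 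Then $\mathfrak{g}^{k+p+1}(J)=\mathbb{R}v+\mathbb{R}Jv+\mathfrak{g}^{k+p+2}(J)$, and since this is strictly larger than $\mathfrak{g}^{k+p+2}(J)$ while $\dim$ can only jump by $1$ or $2$, we must have $Jv\notin\mathbb{R}v\oplus\mathfrak{g}^{k+p+2}(J)$, so $\dim\mathfrak{g}^{k+p+1}(J)-\dim\mathfrak{g}^{k+p+2}(J)=2$ with $v,Jv$ spanning the quotient.

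The crux is then to derive a contradiction from the integrability condition applied to this one extra dimension. The key computation: take any $X,Y\in\mathfrak{g}^{k-1}(J)$. We have $[X,Y]\in\mathfrak{g}^k(J)=\mathfrak{g}^{k+p+1}(J)=\mathbb{R}v+\mathbb{R}Jv+\mathfrak{g}^{k+p+2}(J)$. Working modulo $\mathfrak{g}^{k+p+2}(J)$, the bracket $\mathfrak{g}^{k-1}(J)\times\mathfrak{g}^{k-1}(J)\to\mathbb{R}v\oplus\mathbb{R}Jv$ is an alternating form valued in a $J$-invariant $2$-plane; I would show that integrability (\ref{Nijenhuis}), restricted to $\mathfrak{g}^{k-1}(J)$ and read modulo $\mathfrak{g}^{k+p+2}(J)$, forces this form to be $J$-invariant in the sense $[JX,JY]\equiv[X,Y]$, i.e. it descends to an abelian-type structure on $\mathfrak{g}^{k-1}(J)/\mathfrak{g}^{k+p+2}(J)$ with image in the complex line $\mathbb{R}v\oplus\mathbb{R}Jv$. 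The actual contradiction should come from the ordinary descending central series: $v$, being a generator of $\mathfrak{g}^{k+p+1}/\mathfrak{g}^{k+p+2}$, is a bracket of a $\mathfrak{g}^1$-element with a $\mathfrak{g}^{k+p}$-element, and tracking the real (non-$J$-hull) descending series through the chain of $p+1$ equalities $\mathfrak{g}^k(J)=\dots=\mathfrak{g}^{k+p}(J)$ one shows $\mathfrak{g}^{k+p}$ cannot be only "one real dimension thick" above $\mathfrak{g}^{k+p+1}$ while its $J$-hull stays constant — essentially $Jv$ would have to already lie in $\mathfrak{g}^{k+p+1}(J)=\mathfrak{g}^k(J)$ built from $\mathfrak{g}^{k+p+1}$, collapsing the jump.

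The main obstacle I anticipate is making the "modulo $\mathfrak{g}^{k+p+2}(J)$" bookkeeping precise: one must check that all the auxiliary brackets appearing in the Nijenhuis identity (namely $[JX,Y]$, $[X,JY]$, $J[\cdot,\cdot]$ for $X,Y$ ranging over the relevant ideals) genuinely land in $\mathfrak{g}^{k+p+2}(J)$ rather than merely in $\mathfrak{g}^{k+p+1}(J)$, which requires carefully exploiting that the chain of equalities has length $p\ge 1$ and relating $\mathfrak{g}^{k-1}(J)$-brackets to $\mathfrak{g}^{k+1}$ — not just $\mathfrak{g}^k$ — via Proposition \ref{l-commut}. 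Once the alternating form is shown to take values in a complex line and be $J$-compatible, the dimension count $\dim\mathfrak{g}^{k+p+1}(J)-\dim\mathfrak{g}^{k+p+2}(J)=2$ together with $\dim\mathfrak{g}^{k+p+1}-\dim\mathfrak{g}^{k+p+2}=1$ should be directly incompatible with $\mathfrak{g}^{k+p+1}(J)=\mathfrak{g}^{k+p+1}+J\mathfrak{g}^{k+p+1}$ and the analogous formula one step down, completing the contradiction.
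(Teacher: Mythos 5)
Your argument does not go through, for two independent reasons. First, the opening reductions contain assertions that contradict the hypothesis (\ref{jump_cond}): you write $[\mathfrak{g},\mathfrak{g}^{k+p}]\subset\mathfrak{g}^{k+p}(J)=\mathfrak{g}^{k+p+1}(J)$ and conclude that $\mathfrak{g}^{k+p+1}(J)$ coincides with $\mathfrak{g}^{k+p}(J)$, whereas the lemma assumes the inclusion $\mathfrak{g}^{k+p}(J)\supset\mathfrak{g}^{k+p+1}(J)$ is \emph{strict}; moreover the strictness of $\mathfrak{g}^{k+p+1}(J)\supset\mathfrak{g}^{k+p+2}(J)$, on which your codimension count $\dim\mathfrak{g}^{k+p+1}(J)-\dim\mathfrak{g}^{k+p+2}(J)=2$ rests, is not among the hypotheses at all. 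Second, and decisively, the heart of your contradiction is only announced, not proved. The congruence $[JX,JY]\equiv[X,Y]$ modulo $\mathfrak{g}^{k+p+2}(J)$ for $X,Y\in\mathfrak{g}^{k-1}(J)$ does not follow from the tools you cite: Proposition~\ref{l-commut} only places $[JX,Y]$, $[X,JY]$, and hence $J[JX,Y]+J[X,JY]$, in $\mathfrak{g}^{k}(J)$, which sits $p+2$ levels above $\mathfrak{g}^{k+p+2}(J)$, so the Nijenhuis identity (\ref{Nijenhuis}) gives no control at that depth without substantial further work --- you flag this bookkeeping yourself as the ``main obstacle''. Since the conclusion genuinely fails when the chain of equalities is absent (the algebra $\mathfrak{g}_{6,8}$ of Example~\ref{ex_{6,8}} has consecutive strict inclusions and a one-dimensional jump at the bottom of its central series), any proof must make the use of $p\ge1$ and $k\ge2$ explicit, and your sketch never does.

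The paper's proof runs along entirely different, dual lines. One passes to the filtration $V_l^{1,0}=(V_l\mathfrak{g}^*)^{\mathbb C}\cap\Lambda^{1,0}$, where (\ref{jump_cond}) becomes a chain of equalities $V_k^{1,0}=\dots=V_{k+p}^{1,0}$ flanked by strict inclusions. Choosing $\omega\in V_{k+p+1}^{1,0}\setminus V_{k+p}^{1,0}$ and expanding $\Omega=d\omega$ by Lemma~\ref{d-J-model}, every term has a holomorphic factor lying already in $V_k^{1,0}=V_{k+p-1}^{1,0}$ (this is where $p\ge1$ enters), while some term must involve a generator $e^{l_0}_{k+p}$ of $V_{k+p}\mathfrak{g}^*$ over $V_{k+p-1}\mathfrak{g}^*$. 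Consequently $\Omega$ contains a nonzero multiple of $\omega_1^{k_0}\wedge e^{l_0}_{k+p}$ but no conjugate term $\bar\omega_1^{k_0}\wedge e^{l_0}_{k+p}$, whereas coboundaries of elements of $V_{k+p}(\mathfrak{g}^*)^{\mathbb C}$ contain neither; hence $[\Omega]$ and $[\bar\Omega]$ are linearly independent modulo such coboundaries, which forces $\omega$ and $\bar\omega$ to contribute two new independent directions to the next quotient of the filtration and yields the codimension-two jump. None of this cohomological mechanism is present in your proposal, so the gap is not a matter of polishing: the key idea is missing.
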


\begin{proof} 
The condition (\ref{jump_cond}) is equivalent to the dual one
$$
V_{k{-}1}^{1{,}0} \subset V_k^{1{,}0} =V_{k{+}1}^{1{,}0} =
\dots =V_{k{+}p}^{1{,}0} \subset V_{k{+}p{+}1}^{1{,}0}, \; k \ge 2.
$$

Let us consider $\omega \in V_{k{+}p{+}1}^{1{,}0}, \omega
\notin V_{k{+}p}^{1{,}0}$. Fix a base
$\omega^1_1,\dots, \omega^{j_1}_1$ of $V_1^{1{,}0}$.  Add thereto new elements $\omega^1_{2},\dots,\omega^{j_2}_2$ (if necessary) to obtain a basis of $V_2^{1{,}0}$. Continue this process sequentially.
In the last step we add $1$-forms $\omega^{1}_{k}, \dots, \omega^{j_{k}}_{k} \in  V_{k}^{1{,}0}$ such that 
the whole set 
$$
\omega^1_1, \dots, \omega^{j_1}_1, \omega^1_2, \dots, \omega^{j_2}_2, \dots, \omega^1_k, \dots, \omega^{j_k}_k,
$$
 would form a basis of $V_{k}^{1{,}0}$.
It follows from (\ref{d-J-model}) that 
\begin{equation}
\begin{split}
d\omega=\Omega=\omega^1_1 \wedge \xi^1_1 +\dots +
\omega^{j_1}_1 \wedge \xi^{j_1}_1+\sum_{2 \le m \le k, 1 \le l \le j_m }\omega^l_{m} \wedge \xi^l_{m},\\
\omega^l_m \in V_m^{1,0}, \; \xi^l_m \in V_{k{+}p{-}m} (\mathfrak{g}^*)^{\mathbb C}, \; 1 \le m \le k, \; 1 \le l \le j_m.
\end{split}
\end{equation}

Consider two inclusions
$
\mathfrak{g}^2 \subset \mathfrak{g}^2(J) \subset \mathfrak{g}.
$ 
We recall that the second one is strict. We choose a basis in the annihilator $\mathfrak{g}^2(J)^{ann}$ 
and complete it (if necessary, i.e. $\mathfrak{g}^2 \ne \mathfrak{g}^2(J)$) to a whole basic of $V_1\mathfrak{g}$. 
We denote the elements that we add by
$$e^1_1,\dots,e^r_1.$$
Remark that 
$$
\omega^1_1,\dots, \omega^{j_1}_1, \bar{\omega}^1_1,\dots, \bar{\omega}^{j_1}_1, e_1^1, \dots, e_1^{r},
$$
is a basis of subspace $V_1(\mathfrak{g}^*)^{\mathbb C}$. 
We denote by 
$$
e^1_k,e^2_k,\dots, e^{j_k}_k, \;  2\le k \le s(\mathfrak{g}),
$$
linear independent $1$-forms such that 
$$
V_k \mathfrak{g}= Span(e^1_k,e^2_k,\dots, e^{j_k}_k) \oplus V_{k{-}1} \mathfrak{g}, \; 2 \le k \le s(\mathfrak{g}).
$$ 
It is obvious that
$$
V_k (\mathfrak{g})^{\mathbb C}= Span^{\mathbb C}(e^1_k,e^2_k,\dots, e^{j_k}_k) \oplus
 V_{k{-}1} (\mathfrak{g})^{\mathbb C}, \; 2 \le k \le s(\mathfrak{g}).
$$ 

\begin{proposition}
Cohomology classes  $[\Omega]$ and $[\bar{\Omega}]$ are linearly independent in 
$\ker {\varphi_{k{+}p}} \subset H^2(\Lambda^2(V_{k{+}p}(\mathfrak{g}^*)^{\mathbb C}))$,
where the mapping in the cohomology
$$
\varphi_{k{+}p}: 
H^2(\Lambda^2(V_{k{+}p}(\mathfrak{g}^*)^{\mathbb C})) \to
H^2((\mathfrak{g}^*)^{\mathbb C}),
$$
is induced by the inclusion of exterior d-algebras
$$
\Lambda^*(V_{k{+}p}(\mathfrak{g}^*)^{\mathbb C}) \to
\Lambda^*((\mathfrak{g}^*)^{\mathbb C}).
$$
\end{proposition}

\begin{proof}
Among $\xi^1_1,\dots, \xi^{j_1}_1$ there is at least one $\xi^{k_0}_1$ that belongs to 
$V_{k{+}p}(\mathfrak{g}^*)^{\mathbb C}$ and does not belong to $V_{k{+}p{-}1}(\mathfrak{g}^*)^{\mathbb C}$
(otherwise $\Omega \in \Lambda^2(V_{k{+}p{-}1}(\mathfrak{g}^*)^{\mathbb C})$ and hence
$\omega \in V_{k{+}p}(\mathfrak{g}^*)^{\mathbb C}$ which contradicts to our choice of $\omega$).

We decompose $\xi_{k_0}$ into a linear combination of basis vectors:
$$
\xi^{k_0}_1=\sum_{j=1}^{m_{k{+}p}}A_{j,k{+}p} e_{k{+}p}^j+
\sum_{1 \le r \le m_{q} , 1 \le q \le k{+}p{-}1}A_{r,q} e^r_q +\sum_{t=1}^{j_1} B_t \omega_1^t 
+\sum_{s=1}^{j_1} C_s \bar{\omega}_1^s.
$$
There is $l_0, 1 \le l_0 \le j_m$, such that $A_{l_0,k{+}p} \ne 0$. Hence
$$
\Omega=A_{l_0,k{+}p}\omega^{k_0}_1 \wedge e_{k{+}p}^{l_0} + {\Omega}_1, \;A_{l_0,k{+}p}\ \ne 0.
$$
The key observation is that in the expansion of $\Omega$ there is no term proportional to
$\bar \omega^{k_0}_1 \wedge e_{k{+}p}^{l_0}$.
On the other hand, the expansion of an arbitrary coboundary $d\rho, \rho \in V_{k{+}p}(\mathfrak{g}^*)^{\mathbb C}$ can not contain terms 
$\omega^{k_0}_1 \wedge e_{k{+}p}^{l_0}$ and $\bar \omega^{k_0}_1 \wedge e_{k{+}p}^{l_0}$, because
$$
d \rho \in \Lambda^2(V_{k{+}p{-}1}(\mathfrak{g}^*)^{\mathbb C}), \;  \rho \in V_{k{+}p}(\mathfrak{g}^*)^{\mathbb C}.
$$
Thus the 
cohomology classes  $[\Omega]$ and $[\bar{\Omega}]$ are linearly independent in 
$\ker {\varphi_{k{+}p}}$.
\end{proof}

\begin{example}
Consider again Example \ref{ex_{6,8}}. In the first step we chose $\omega^1, \bar \omega^1$ 
as a basis of $V_1(\mathfrak{g}_{6,8})^{\mathbb C}$. The kernel  of the cohomology map induced
by the inclusion
$$
 \Lambda^2(\omega^1, \bar \omega^1) \to \Lambda^2((\mathfrak{g}_{6,8}^*)^{\mathbb C}),
$$
is one-dimensional and it is spanned by $[\omega_1 \wedge \bar \omega_1]={-}2i[e^1 \wedge e^2]$.
In the second step $V_1^{1,0}=V_2^{1,0}$ and we 
added a new generator $e^3$ in order to kill the kernel $\ker{\varphi_1}$:
$$
de^3=e^1 \wedge e^2=
\frac{i}{2} \omega^1 \wedge \bar \omega^1.
$$

The third step. We have a strict inclusion $V_2^{1,0} \subset V_3^{1,0}$ and and we took $\omega^2$, which together with $\omega^1$  constitutes the basis of the subspace $V_3^{1,0}$. Thus the equality holds
$$
d\omega^2=\Omega_2=\omega^1 \wedge e^3.
$$
The cocycles $\Omega_2=\omega^1 \wedge e^3$ and $\bar \Omega_2=\bar \omega^1 \wedge e^3$ are obviously linearly independent
in $\Lambda^2(\omega^1, \bar \omega^1,e^3)$. An arbitrary
$2$-coboundary has the form $\alpha \omega^1 \wedge \bar \omega^1, \alpha \in {\mathbb C}$.
Hence  cohomology classes $[\Omega_2]$ and $[\bar{\Omega}_2]$ are linearly independent in 
$\ker {\varphi_2}$ in $2$-cohomology and they span this kernel. So we add new generators 
$\omega^2, \bar \omega^2$ and kill $\ker{\varphi_2}$.

The last step. Again we have a strict inclusion $V_3^{1,0} \subset V_4^{1,0}$. We add 
$\omega_3$ and get a basis $\omega^1, \omega^2, \omega^3$ of $\Lambda^{1,0}$:
$$
d \omega^3=\Omega_3=\frac{1}{2}\left(\omega^1 \wedge \bar \omega^2 + \bar \omega^1 \wedge \omega^2 \right)-
\frac{1}{2}\omega^1 \wedge \bar \omega^1.
$$
The kernel  of the cohomology map induced
by the inclusion
$$
 \Lambda^2(\omega^1, \bar \omega^1, e^3, \omega^2, \bar \omega^2) \to \Lambda^2((\mathfrak{g}_{6,8}^*)^{\mathbb C}),
$$
is one-dimensional and it is spanned by 
$[\Omega_3]=[\omega^1{\wedge} \bar \omega^2{+} \bar \omega^1 {\wedge} \omega^2]=2[e^1 \wedge e^4 + e^2 \wedge e^5]$.

We see that 
$$
\Omega_3 - \bar \Omega_3= \omega^1 \wedge \bar \omega^1=d(-2ie^3).
$$
So in this case $2$-classes $\Omega_3$ and $\bar \Omega_3$ are linearly dependent in $\ker{\varphi_3}$. There is no contradiction
with Lemma \ref{mainlemma}, because we have the case of two consecutive  strict inclusions:
$$
V_2^{1,0} \subset V_3^{1,0} \subset V_4^{1,0}.
$$
\end{example}

\end{proof}%конец доказательства основной леммы
\begin{proposition}
\label{ge5}
Let $\mathfrak{g}$ be a nilpotent Lie algebra endowed with an integrable complex structure and $\dim{\mathfrak{g}} \ge 6$. Then we have
the following estimate:
\begin{equation}
\label{no_filiform}
\dim{\mathfrak{g}} - \dim{\mathfrak{g}^4}=\dim{\mathfrak{g}} - \dim{\left[\mathfrak{g}, \left[\mathfrak{g}, [\mathfrak{g},\mathfrak{g}]\right]\right]}
\ge 5.
\end{equation}
\end{proposition}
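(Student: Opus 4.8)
The plan is to exploit the $J$-invariant filtration $\mathfrak{g}^k(J)$ together with the dual combinatorics developed in Lemma~\ref{mainlemma}, and to count dimension drops. Set $d_k = \dim\mathfrak{g}^k(J)$ and let $s = s(\mathfrak{g})$ be the nil-index. By Proposition~\ref{nontrivial_H} the first inclusion $\mathfrak{g} = \mathfrak{g}^1(J) \supset \mathfrak{g}^2(J)$ is strict, so $d_1 - d_2 \ge 2$. I would separate the argument according to how the chain $\mathfrak{g}^2(J) \supseteq \mathfrak{g}^3(J) \supseteq \mathfrak{g}^4(J)$ behaves: if both of these inclusions are strict, then each drops the even-dimensional $J$-invariant subspace by at least $2$, giving $d_1 - d_4 \ge 6$; since $\mathfrak{g}^4(J) \supseteq \mathfrak{g}^4$, this yields $\dim\mathfrak{g} - \dim\mathfrak{g}^4 \ge 6 \ge 5$. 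So the only case that needs real work is when at least one of these inclusions is an equality, i.e.\ $E \ge 1$ in the language preceding Lemma~\ref{mainlemma}.

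In that remaining case I would invoke Lemma~\ref{mainlemma} directly. Suppose $\mathfrak{g}^{k-1}(J) \supset \mathfrak{g}^k(J) = \cdots = \mathfrak{g}^{k+p}(J) \supset \mathfrak{g}^{k+p+1}(J)$ is the first maximal block of equalities, with $2 \le k$ and $p \ge 1$, occurring inside the range $k \le 4$. The lemma then gives $\dim\mathfrak{g}^{k+p+1} - \dim\mathfrak{g}^{k+p+2} \ge 2$, which is the crucial extra drop in the ordinary (not $J$-invariant) descending central series that compensates for the equality in the $J$-chain. Concretely: in the chain $\mathfrak{g} \supseteq \mathfrak{g}^2 \supseteq \mathfrak{g}^3 \supseteq \mathfrak{g}^4$ we always have $\dim\mathfrak{g} - \dim\mathfrak{g}^2 \ge 2$ and $a_2(\mathfrak{g}), a_3(\mathfrak{g}) \ge 1$, giving the cheap bound $\dim\mathfrak{g} - \dim\mathfrak{g}^4 \ge 4$; I need to upgrade one of these $\ge 1$ steps (or the $\ge 2$ step) to gain one more. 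When the equality block starts at $k = 2$ or $k = 3$ with $p = 1$, one has $k + p + 1 \le 5$, and the lemma forces $a_{k+p+1}(\mathfrak{g}) = \dim\mathfrak{g}^{k+p+1} - \dim\mathfrak{g}^{k+p+2} \ge 2$; tracking this through and adding it to the other mandatory drops gives $\dim\mathfrak{g} - \dim\mathfrak{g}^4 \ge 5$. The bookkeeping must also cover $p \ge 2$ and the sub-case where $\mathfrak{g}^{k+p+1} \subseteq \mathfrak{g}^4$ versus where it is not.

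The main obstacle, and the place where care is needed, is the matching of indices between the two filtrations: Lemma~\ref{mainlemma} produces an extra dimension jump at level $k+p+1$ in the plain descending series $\{\mathfrak{g}^j\}$, but I want a statement about $\mathfrak{g}^4$ specifically, so I must ensure that the level at which the extra drop occurs is $\le 4$ (so it is "captured" inside $\dim\mathfrak{g} - \dim\mathfrak{g}^4$), and simultaneously that no double-counting occurs with the generic $a_i \ge 1$ estimates. This is handled by observing that an equality $\mathfrak{g}^k(J) = \mathfrak{g}^{k+1}(J)$ with $k \le 3$ forces the first strict drop afterwards to be at a level $\le 4$, and that the case $\dim\mathfrak{g} \ge 6$ hypothesis rules out the degenerate small-dimensional configurations (e.g.\ it forbids $\mathfrak{g}$ being too short for the chain to reach level $4$ nontrivially while still admitting $J$). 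Once the index alignment is pinned down, the inequality $\dim\mathfrak{g} - \dim\mathfrak{g}^4 \ge 5$ follows by adding the forced drops: $2$ (first step) $+ 1 + 1$ (two generic steps) $+ 1$ (the upgrade from Lemma~\ref{mainlemma}) $= 5$.
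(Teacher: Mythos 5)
Your easy cases are fine (and in fact cheaper than you make them: if $\mathfrak{g}^2(J)\supset\mathfrak{g}^3(J)$ is strict, then already $\dim\mathfrak{g}-\dim\mathfrak{g}^3\ge\dim\mathfrak{g}-\dim\mathfrak{g}^3(J)\ge 4$ and $a_3\ge 1$ gives $5$). The genuine gap is in the remaining case, where you lean on Lemma~\ref{mainlemma}. That lemma, for a block of equalities starting at $k\ge 2$ with $p\ge 1$, produces an extra dimension drop $\dim\mathfrak{g}^{k+p+1}-\dim\mathfrak{g}^{k+p+2}\ge 2$, i.e.\ it upgrades $a_{k+p+1}$ with $k+p+1\ge 4$. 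But the quantity you must bound is $\dim\mathfrak{g}-\dim\mathfrak{g}^4=a_1+a_2+a_3$, and a guaranteed jump in $a_4$ or $a_5$ contributes nothing to that sum. Your closing tally ``$2+1+1+1=5$'' silently assumes the upgraded step sits at level $\le 3$ of the \emph{ordinary} descending central series, which the lemma never delivers; your proposed fix (``the first strict drop afterwards is at level $\le 4$'') concerns the $J$-invariant chain and does not move the extra drop into the window $a_1,a_2,a_3$. Concretely, for a block $\mathfrak{g}^2(J)=\mathfrak{g}^3(J)=\mathfrak{g}^4(J)\supset\mathfrak{g}^5(J)$, or for $\mathfrak{g}^2(J)\supset\mathfrak{g}^3(J)=\mathfrak{g}^4(J)$, your method yields only $a_1+a_2+a_3\ge 4$ plus information about $a_4$ or higher, and the argument does not close. (A further warning sign: applied literally with $k=2$, $p=1$ to $\mathfrak{g}_{6,8}$, the lemma's stated conclusion would read $a_4\ge 2$, whereas $a_4(\mathfrak{g}_{6,8})=1$; the indices in that lemma are delicate, so it is not a safe black box for this bookkeeping.)

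The paper's proof goes a different way and avoids the $J$-invariant chain entirely. It first reduces to $b_1(\mathfrak{g})=a_1(\mathfrak{g})=2$ (otherwise $a_1+a_2+a_3\ge 3+1+1=5$ automatically). Then $V_1^{1,0}$ is spanned by a single form $\omega^1$, so $d\omega^2=\omega^1\wedge\xi^{(0)}$ with $\xi^{(0)}\in V_k\setminus V_{k-1}$ for some $k\ge 2$; the identity $d^2=0$ forces $d\xi^{(0)}=\omega^1\wedge\xi^{(1)}$ with $\xi^{(1)}$ one level lower, and iterating this descent lands on $\xi^{(k-2)}=\alpha e^3+\beta\omega^1+\gamma\bar\omega^1$ with $\alpha\ne 0$. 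The two cocycles $\omega^1\wedge\xi^{(k-2)}$ and $\bar\omega^1\wedge\bar\xi^{(k-2)}$ then force two new independent elements of $V_3\mathfrak{g}^*$, i.e.\ $a_3(\mathfrak{g})=\dim\mathfrak{g}^3-\dim\mathfrak{g}^4\ge 2$, and $2+1+2=5$. If you want to salvage your plan, you need an argument of exactly this type that places the forced extra generator at level $3$ of the filtration $\{V_l\mathfrak{g}^*\}$, not merely somewhere after the block of equalities in the $J$-chain.
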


\begin{proof}
Our assertion is necessary to prove only for  Lie algebras $\mathfrak{g}$ with the first Betti number
$b_1(\mathfrak{g})=2$ (for all other algebras our inequality holds automatically). In this case forms $\omega^1, \bar \omega^1$ span the space
$V_1(\mathfrak{g}^*)^{\mathbb C}$ and 
$$
d \omega^2=\omega^1 \wedge \xi^{(0)}, \xi^{(0)} \in V_k(\mathfrak{g}^*)^{\mathbb C}, k \ge 2, 
\xi^{(0)} \notin V_{k{-}1}(\mathfrak{g}^*)^{\mathbb C}.
$$
If $\xi^{(0)} \in V_1(\mathfrak{g}^*)^{\mathbb C}$ then either $\omega^2$ is closed or $\omega^1 \wedge \xi^{(0)}$ is proportional to
$\omega^1 \wedge \bar \omega^1$ which is pure imaginary form and hence in this case
the real part of $\omega^2$ have to be closed and it condradicts to the assumption that
$b_1(\mathfrak{g})=2$.

We see that $d \xi^{(0)}=\omega^1 \wedge \xi^{(1)}, \xi^{(1)} \in V_{k{-}1}(\mathfrak{g}^*)^{\mathbb C}$.

Continuing step-by-step process, we get at the end two linearly independent $2$-cocycles
$$
\omega^1 \wedge \xi^{(k{-}2)}=\omega^1 \wedge (\alpha e^3{+}\beta\omega^1{+}\gamma \bar \omega^1),\;\;
 \bar \omega^1 \wedge \bar \xi^{(k{-}2)}= \bar \omega^1 \wedge (\bar \alpha e^3{+}\bar \beta \bar \omega^1{+}\bar \gamma \omega^1),
$$
where $de^3=e^1{\wedge} e^2, \; \omega^1=e^1{+}ie^2$ and $\alpha \ne 0$.
Hence 
$$
\dim V_3(\mathfrak{g}^*)^{\mathbb C} - \dim V_2(\mathfrak{g}^*)^{\mathbb C}\ge 2,
$$
which means that 
$
\dim{\mathfrak{g}^3} - \dim{\mathfrak{g}^4} \ge 2,
$
Finally we have
$$
\left( \dim{\mathfrak{g}} - \dim{\mathfrak{g}^2}\right) + \left( \dim{\mathfrak{g}^2} - \dim{\mathfrak{g}^3} \right)+
\left(\dim{\mathfrak{g}^3} - \dim{\mathfrak{g}^4}\right) \ge 5.
$$
\end{proof}

As an elementary corollary we get the assertion that was proved in \cite{GzR}:
\begin{corollary}
A filiform Lie algebra $\mathfrak{g}$ does not admit any integrable complex structure.
\end{corollary}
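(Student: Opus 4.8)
The plan is to extract the corollary from Proposition~\ref{ge5} by pure bookkeeping on the numbers $a_k(\mathfrak{g})$. Recall that $\mathfrak{g}$ is filiform iff $a(\mathfrak{g}) = (2,1,1,\dots,1)$; hence
$$
b_1(\mathfrak{g}) = \dim\mathfrak{g} - \dim\mathfrak{g}^2 = a_1(\mathfrak{g}) = 2
\quad\text{and}\quad
\dim\mathfrak{g}^k - \dim\mathfrak{g}^{k+1} = a_k(\mathfrak{g}) = 1 \ \ (2 \le k \le s(\mathfrak{g})).
$$
Since an almost complex structure requires $\dim\mathfrak{g}$ even, and the $2$-dimensional ``filiform'' algebra is the abelian $\mathbb{R}^2$ (which one excludes, or deals with trivially), it suffices to rule out a complex structure $J$ on a filiform $\mathfrak{g}$ with $\dim\mathfrak{g} = 2n$, $n \geq 2$.

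For $\dim\mathfrak{g} \geq 6$ I would simply invoke Proposition~\ref{ge5}: it gives $\dim\mathfrak{g} - \dim\mathfrak{g}^4 \geq 5$, whereas telescoping and the values above yield
$$
\dim\mathfrak{g} - \dim\mathfrak{g}^4
= \bigl(\dim\mathfrak{g} - \dim\mathfrak{g}^2\bigr) + \bigl(\dim\mathfrak{g}^2 - \dim\mathfrak{g}^3\bigr) + \bigl(\dim\mathfrak{g}^3 - \dim\mathfrak{g}^4\bigr)
= 2 + 1 + 1 = 4,
$$
a contradiction. (Here $\dim\mathfrak{g}\ge 6$ is exactly what makes $a_2, a_3$ defined and equal to $1$.)

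The only case not covered is $\dim\mathfrak{g} = 4$, since Proposition~\ref{ge5} is stated --- and genuinely true only --- for $\dim\mathfrak{g}\ge 6$ (e.g.\ $\mathfrak{h}_3\oplus\mathbb{R}$ violates~(\ref{no_filiform}) in dimension $4$). This is the one point requiring separate care. Here I would re-run the $b_1=2$ branch of the proof of Proposition~\ref{ge5}: because $a_1(\mathfrak{g})=2$ forces $b_1(\mathfrak{g})=2$, the forms $\omega^1,\bar\omega^1$ span $V_1(\mathfrak{g}^*)^{\mathbb{C}}$, and the step-by-step descent there produces two $\mathbb{C}$-linearly independent classes $[\omega^1\wedge\xi^{(k-2)}]$, $[\bar\omega^1\wedge\bar\xi^{(k-2)}]$ in $\ker\varphi_2$; this forces $\dim\mathfrak{g}^3-\dim\mathfrak{g}^4\ge 2$, contradicting $a_3(\mathfrak{g})=1$. (Equivalently: the unique $4$-dimensional filiform algebra $\mathfrak{m}_0(4)$, $de^3=e^1\wedge e^2$, $de^4=e^1\wedge e^3$, has $b_1=2$ and thus admits no complex structure.) I expect this low-dimensional bookkeeping to be the only, and very mild, obstacle; all the substance is already in Proposition~\ref{ge5}, and the corollary says merely that the descending central series of a filiform algebra falls one short of what~(\ref{no_filiform}) demands.
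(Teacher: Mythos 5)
Your proof is correct and takes essentially the same route as the paper: the paper's entire argument is the observation that a filiform algebra has $a(\mathfrak{g})=(2,1,1,\dots,1)$, hence $\dim\mathfrak{g}-\dim\mathfrak{g}^4=2+1+1=4$, contradicting the bound $\ge 5$ of Proposition~\ref{ge5}. The one genuine difference is that you notice and repair a small gap the paper leaves open: Proposition~\ref{ge5} is stated (and, as your $\mathfrak{h}_3\oplus\mathbb{R}$ example shows, only true) for $\dim\mathfrak{g}\ge 6$, so the four-dimensional filiform algebra $\mathfrak{m}_0(4)$ is not formally covered by the paper's one-line proof; your separate treatment of that case by re-running the $b_1=2$ branch of the argument (forcing $\dim\mathfrak{g}^3-\dim\mathfrak{g}^4\ge 2$, against $a_3=1$) is sound and makes the corollary complete as stated.
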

\begin{proof}
It follows from the definition  that for an arbitrary filiform Lie algebra
$\mathfrak{g}$ the following equality holds on:
$$
\dim{\mathfrak{g}}-\dim{\mathfrak{g}^4}=2+1+1=4.
$$
\end{proof}

%\begin{proposition}
%Let $\mathfrak{g}$ be a nilpotent Lie algebra endowed with an integrable complex structure and %$\dim{\mathfrak{g}} \ge 8$. Then we have
%the following estimate:
%\begin{equation}
%\label{no_filiform}
%\dim{\mathfrak{g}} - \dim{\mathfrak{g}^6}\ge 8.
%\end{equation}
%\end{proposition}

%\begin{proof}
%Our assertion is necessary to prove only for  Lie algebras $\mathfrak{g}$ with the first Betti number
%$b_1(\mathfrak{g})=2,3$. It follows that for an arbitrary nilpotent Lie algebra $\mathfrak{g}$ we have
%$$
%\dim{\mathfrak{g}}-\dim{\mathfrak{g}^6} \ge b_1(\mathfrak{g})+4.
%$$ 

%1) Let $b_1(\mathfrak{g})=2$. In this case (like in Proposition \ref{ge5}) forms $\omega^1, \bar %\omega^1$ span the space
%$V_1(\mathfrak{g}^*)^{\mathbb C}$ and 
%$$
%d \omega^2=\omega^1 \wedge \xi^{(0)}, \xi^{(0)} \in V_k(\mathfrak{g}^*)^{\mathbb C}, k \ge 2, 
%\xi^{(0)} \notin V_{k{-}1}(\mathfrak{g}^*)^{\mathbb C}.
%$$
%If $k >2$ one can construct  (Proposition \ref{ge5}) linearly independent $1$-forms 
%$\xi^{k{-}1},\bar \xi^{k{-}1}, \xi^{k}, \bar \xi^{k}$ in 
%$V_4(\mathfrak{g})^{\mathbb C}/V_1(\mathfrak{g})^{\mathbb C}$.
%Hence 
%$$
%\dim{\mathfrak{g}}-\dim{\mathfrak{g}^4}=\dim{V_5(\mathfrak{g})^{\mathbb C}} \ge 8.
%$$
%We see that $d \xi^{(0)}=\omega^1 \wedge \xi^{(1)}, \xi^{(1)} \in V_{k{-}1}(\mathfrak{g}^*)^{\mathbb C}$.
%\end{proof}

\section{Main example}

\begin{example}
Define a positively graded
Lie algebra
$\mathfrak{D}(n)=\oplus_{l=1}^{n} \mathfrak{D}_{l}$ such that:
$$\dim \mathfrak{D}_{l}(n)= \left\{\begin{array}{r}
   1, \; \; \quad l=2k, l \le n ;\\
   2, \; l=2k{-}1, l \le n.\\
   \end{array} \right.$$
Let $v_{2k{-}1}, u_{2k{-}1}$ denote basic elements in
$\mathfrak{D}_{2k{-}1}(n)$ and $w_{2k}$ in
$\mathfrak{D}_{2k}(n)$ respectively. Then
a Lie algebra structure of $\mathfrak{D}(n)$ is defined by:
\begin{equation}
\label{slt}
[v_i, w_j ]=\left\{\begin{array}{r} u_{i{+}j}, i{+}j \le n;\\
0, \;\; i{+}j > n.\\
\end{array}
\right., 
[w_j, u_l ]=\left\{\begin{array}{r} v_{j{+}l},  j{+}l \le n;\\
0, \;\;  j{+}l >n.\\
\end{array} \right.,
[u_l, v_i ]=\left\{\begin{array}{r} w_{l{+}i}, l{+}i \le n;\\
0, \;\; l{+}i > n.\\
\end{array}
\right.
\end{equation}
We recall that indexes $i, l$ (index $j$) in (\ref{slt}) are taking odd
(even) positive integer values.
\end{example}

\begin{remark}
$\mathfrak{D}(4)$ is isomorphic to the algebra $\mathfrak{g}_{6,8}$ from the Example \ref{ex_{6,8}}.
\end{remark}

\begin{proposition}
$\mathfrak{D}(n)$ is naturally graded nilpotent Lie algebra and 
$$
\dim{\mathfrak{D}(n)}=\left\{ \begin{array}{ll}6m,& n=4m,\\
6m+2,& n=4m{+}1,\\ 6m+3,& n=4m{+}2,\\ 6m+5,& n=4m{+}3,\\ 
\end{array}\right., \;  s(\mathfrak{D}(n))=n.
$$
\end{proposition}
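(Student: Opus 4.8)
The statement is a conjunction of three largely independent facts: that the bracket (\ref{slt}) satisfies the Jacobi identity (so $\mathfrak{D}(n)$ really is a Lie algebra), that the grading $\mathfrak{D}(n)=\bigoplus_{l=1}^{n}\mathfrak{D}_l$ is the natural one attached to the descending central series (which then forces nilpotency and fixes $s(\mathfrak{D}(n))$), and the dimension count. The plan is to dispose of the last two quickly and concentrate on the Jacobi identity.

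For the grading and the numerical data: first I would check that $\mathfrak{D}_1$ generates, i.e.\ $[\mathfrak{D}_1,\mathfrak{D}_l]=\mathfrak{D}_{l+1}$ for $1\le l\le n-1$. When $l$ is even, $\mathfrak{D}_{l+1}$ is spanned by $v_{l+1}=[w_l,u_1]$ and $u_{l+1}=[v_1,w_l]$; when $l$ is odd, $\mathfrak{D}_{l+1}$ is spanned by $w_{l+1}=[u_1,v_l]$. In either case $[\mathfrak{D}_1,\mathfrak{D}_l]\subseteq\mathfrak{D}_{l+1}$ by the grading, so equality holds, and a straightforward induction yields $\mathfrak{D}(n)^k=\bigoplus_{l\ge k}\mathfrak{D}_l$ for every $k\ge 1$. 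Thus the $\mathbb N$-grading coincides with the filtration by the lower central series, so $\mathfrak{D}(n)$ is naturally graded; it is nilpotent because the right-hand side vanishes for $k>n$; and since $\mathfrak{D}(n)^n=\mathfrak{D}_n\ne 0$ we obtain $s(\mathfrak{D}(n))=n$. The dimension is then purely combinatorial: among $1,\dots,n$ there are $\lceil n/2\rceil$ odd integers (each contributing a $2$-dimensional graded piece) and $\lfloor n/2\rfloor$ even ones (each contributing a $1$-dimensional piece), whence $\dim\mathfrak{D}(n)=n+\lceil n/2\rceil$; substituting $n=4m,\,4m{+}1,\,4m{+}2,\,4m{+}3$ gives $6m,\,6m{+}2,\,6m{+}3,\,6m{+}5$ respectively.

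The one genuinely nontrivial point is the Jacobi identity, and the cleanest argument I see is to realize $\mathfrak{D}(n)$ inside a current algebra. Let $\mathfrak{e}$ be the $3$-dimensional Lie algebra with basis $e_1,e_2,e_3$ and relations $[e_1,e_2]=e_3,\ [e_2,e_3]=e_1,\ [e_3,e_1]=e_2$ (the cross-product algebra $\mathfrak{so}(3)$): the ``colour'' pattern $v\to w\to u$ of the bracket (\ref{slt}) is precisely its cyclic bracket. Form the positively graded current algebra $\mathfrak{e}\otimes t\,\mathbb{R}[t]$ with $\deg(e_a\otimes t^l)=l$, and let $\mathfrak{D}(\infty)$ be the graded Lie algebra defined by the \emph{untruncated} version of (\ref{slt}) (delete every ``$=0$'' clause). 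I would then verify, by comparing structure constants on the few types of basis pairs, that the degree-preserving map
$$\Phi\colon\mathfrak{D}(\infty)\longrightarrow\mathfrak{e}\otimes t\,\mathbb{R}[t],\qquad v_l\mapsto e_1\otimes t^l,\quad w_l\mapsto e_2\otimes t^l,\quad u_l\mapsto e_3\otimes t^l,$$
is a Lie algebra homomorphism; since it is injective and the target satisfies the Jacobi identity, so does $\mathfrak{D}(\infty)$. Finally $\bigoplus_{l>n}\mathfrak{D}_l$ is a graded ideal of $\mathfrak{D}(\infty)$ whose quotient is exactly $\mathfrak{D}(n)$ equipped with the bracket (\ref{slt}), and a quotient of a Lie algebra is a Lie algebra, which completes the proof. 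I expect the main obstacle to be simply discovering this current-algebra realization; the fallback is a direct case analysis of the Jacobiator on basis triples, organized by the colours and using that $\bigoplus_k\mathfrak{D}_{2k}$ is abelian, which works but is tedious.
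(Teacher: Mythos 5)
Your proposal is correct, and for the part of the statement the paper actually argues --- that the grading is the natural one, the nil-index, and the dimension count --- you follow essentially the same route: show $[\mathfrak{D}_1,\mathfrak{D}_l]=\mathfrak{D}_{l+1}$, deduce $\mathfrak{D}(n)^k=\bigoplus_{l\ge k}\mathfrak{D}_l$ by induction, and read off $s(\mathfrak{D}(n))=n$ and the dimensions (the paper leaves the arithmetic as ``easily verified''; your count $\dim\mathfrak{D}(n)=n+\lceil n/2\rceil$ checks out in all four congruence classes). Where you genuinely diverge is the Jacobi identity. The paper simply does not address it in this proof --- the bracket (\ref{slt}) is posited in the preceding Example and implicitly assumed to define a Lie algebra --- whereas you supply a proof by realizing the untruncated algebra inside the current algebra $\mathfrak{so}(3)\otimes t\,\mathbb{R}[t]$ and passing to the quotient by the graded ideal $\bigoplus_{l>n}\mathfrak{D}_l$. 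That realization is correct: with $v\mapsto e_1$, $w\mapsto e_2$, $u\mapsto e_3$ the three clauses of (\ref{slt}) are exactly the cyclic $\mathfrak{so}(3)$ relations tensored with $t^{i+j}$, and the parity constraint (odd indices on $v,u$, even on $w$) just means the image of $\Phi$ is the subalgebra of elements fixed by the involution $e_1\mapsto -e_1$, $e_2\mapsto e_2$, $e_3\mapsto -e_3$, $t\mapsto -t$ --- a twisted loop algebra --- which is closed under the bracket, so Jacobi transfers. What your approach buys is a structural explanation of where $\mathfrak{D}(n)$ comes from (and, in passing, why $\mathfrak{D}(4)\cong\mathfrak{g}_{6,8}$), at the cost of a construction the paper never invokes; the paper's proof is shorter only because it omits this verification. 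One presentational nit: you should define the subalgebra of the current algebra first and then observe that its structure constants are the untruncated (\ref{slt}), rather than speaking of a homomorphism out of $\mathfrak{D}(\infty)$ before it is known to be a Lie algebra, but this is cosmetic and your parenthetical ``since it is injective and the target satisfies Jacobi'' already contains the right logic.
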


\begin{proof}
It is easy to see that
$$
\mathfrak{D}(n)^2=\left[\mathfrak{D}(n), \mathfrak{D}(n) \right]=\oplus_{l=2}^{n} \mathfrak{D}_{l}.
$$
Continue this process step-by-step we have that
$$
\mathfrak{D}(n)^m=\left[\mathfrak{D}(n), \mathfrak{D}(n)^{m{-}1} \right]=\oplus_{l=m}^{n} \mathfrak{D}_{l}, \; m=2,\dots,n.
$$
Hence $\mathfrak{D}(n)^m/\mathfrak{D}(n)^{m{+}1}=\mathfrak{D}_m(n)$ that proves that
$\mathfrak{D}(n)$ is naturally graded. The formulas for the dimension and nil-index can be easily verified.
\end{proof}

\begin{proposition}
The positively graded (nilpotent) Lie algebras $\mathfrak{D}(4m)$ and $\mathfrak{D}(4m{+}1)$ admit
complex structures.
\end{proposition}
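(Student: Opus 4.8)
The plan is to produce, for $n=4m$ and $n=4m{+}1$, an explicit basis of the putative $(1,0)$-space of a complex structure on $\mathfrak{D}(n)$ and then invoke Salamon's criterion (the theorem stated above: $J$ is integrable as soon as $(\mathfrak{g}^{\mathbb C})^*$ admits a basis $\{\omega^1,\dots,\omega^N,\bar\omega^1,\dots,\bar\omega^N\}$ with $d\omega^{l+1}\in I(\omega^1,\dots,\omega^l)$ for all $l$). The construction generalizes the complex structure on $\mathfrak{D}(4)\cong\mathfrak{g}_{6,8}$ from Example \ref{ex_{6,8}}.

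Work in the dual coframe $\{v^l,u^l\ (l\ \mathrm{odd}),\ w^j\ (j\ \mathrm{even})\}$. From (\ref{slt}) the only brackets producing $u_{2k-1}$, $v_{2k-1}$, $w_{2t}$ are of the form $[v_i,w_j]$, $[w_j,u_i]$, $[u_i,v_{i'}]$ respectively, whence
\[
du^{2k-1}=-\!\!\sum_{t=1}^{k-1}v^{2(k-t)-1}{\wedge}w^{2t},\quad
dv^{2k-1}=\sum_{t=1}^{k-1}u^{2(k-t)-1}{\wedge}w^{2t},\quad
dw^{2t}=-\!\!\sum_{\substack{i+i'=2t\\ i,i'\ \mathrm{odd}}}\!\!u^{i}{\wedge}v^{i'}.
\]
Put $\omega_{2k-1}=u^{2k-1}+iv^{2k-1}$; then $d\omega_1=0$ and, for $k\ge2$, a short computation gives the filiform-type recursion $d\omega_{2k-1}=i\sum_{t=1}^{k-1}\omega_{2(k-t)-1}{\wedge}w^{2t}$, so $d\omega_{2k-1}$ lies in the ideal generated by $\omega_1,\dots,\omega_{2k-3}$. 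Moreover every $u^{i}{\wedge}v^{i'}$ (both indices odd) is a $\mathbb{C}$-combination of $\omega_i{\wedge}\omega_{i'},\omega_i{\wedge}\bar\omega_{i'},\bar\omega_i{\wedge}\omega_{i'},\bar\omega_i{\wedge}\bar\omega_{i'}$, so $dw^{2t}$, and hence the differential of \emph{any} linear combination of the $w^j$, lies in the ideal generated by all the forms $\omega_{2k-1}$.

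Let $\mathfrak{w}=\langle w_2,w_4,\dots\rangle$ be the span of the even layers. Its dimension equals the number of even indices $\le n$, which is even precisely when $n\equiv0$ or $1\pmod4$; this is the single place the hypothesis enters, and exactly why the statement excludes $n\equiv2,3$, where $\mathfrak{w}$ has odd dimension. For $n=4m,4m{+}1$ fix an \emph{arbitrary} complex structure $J_0$ on $\mathfrak{w}$ (say pairing consecutive $w$'s) and let $\theta^1,\dots,\theta^d$, $d=\tfrac12\dim\mathfrak{w}$, be a basis of its $(1,0)$-forms.

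Finally, order the holomorphic coframe as $\omega_1,\omega_3,\dots,\omega_{2K-1},\theta^1,\dots,\theta^d$, where $K$ is the number of odd layers. Together with the conjugates this is a basis of $(\mathfrak{g}^{\mathbb C})^*$ (the $\omega_{2k-1},\bar\omega_{2k-1}$ span the complexified odd layers, the $\theta^j,\bar\theta^j$ span $\mathfrak{w}^{\mathbb C}$), and it satisfies Salamon's nested-ideal condition: for $\omega_{2k-1}$ by the recursion above, and for $\theta^j$ because $d\theta^j\in I(\omega_1,\dots,\omega_{2K-1})$, which sits inside the ideal generated by all forms preceding $\theta^j$. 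Hence Salamon's theorem yields an integrable complex structure $J$, given explicitly by $Jv_{2k-1}=u_{2k-1}$, $Ju_{2k-1}=-v_{2k-1}$ on the odd layers and $J=J_0$ on $\mathfrak{w}$. The only labor is verifying the three differential formulas from (\ref{slt}) and the index bookkeeping in the recursion; I do not anticipate a genuine obstacle — the one substantive point is the parity of $\dim\mathfrak{w}$, which makes the auxiliary structure $J_0$ available exactly for the residues $0,1$ modulo $4$.
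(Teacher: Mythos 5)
Your overall route --- write down an explicit coframe adapted to the grading, exhibit the triangular pattern of differentials, and invoke Salamon's criterion --- is essentially the paper's own: the paper takes the same $J$ on the odd layers ($Jv_{2l+1}=u_{2l+1}$) and a specific pairing of the $w$'s, and then records the forms $\omega_{3k},\omega_{3k+1},\omega_{3k+2}$ with exactly this nested-ideal structure. But there is a genuine gap at the one step where integrability is actually tested. You claim $dw^{2t}$ lies in the ideal $I(\omega_1,\omega_3,\dots)$ \emph{because} every $u^{i}\wedge v^{i'}$ is a combination of $\omega_i\wedge\omega_{i'}$, $\omega_i\wedge\bar\omega_{i'}$, $\bar\omega_i\wedge\omega_{i'}$, $\bar\omega_i\wedge\bar\omega_{i'}$. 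That is a non sequitur: the fourth type, $\bar\omega_i\wedge\bar\omega_{i'}$, does \emph{not} lie in the ideal generated by the holomorphic forms alone (and Salamon's ideal $I(\omega^1,\dots,\omega^l)$ is generated only by the $\omega$'s, not their conjugates --- otherwise the criterion would be vacuous). As written, your argument would show that \emph{every} $2$-form on the complexified odd part lies in $I(\omega_1,\omega_3,\dots)$, which is false, so it cannot be the reason the construction works. What actually saves the step is a cancellation coming from the symmetry of the structure constants: since $[u_l,v_i]=w_{l+i}$ for \emph{all} ordered pairs of odd indices with $l+i=2t$, one has $dw^{2t}=\pm\sum_{l+i=2t}u^l\wedge v^i$, and for each unordered pair
$$
u^l\wedge v^i+u^i\wedge v^l=\tfrac{i}{2}\left(\omega_l\wedge\bar\omega_i-\bar\omega_l\wedge\omega_i\right),
\qquad
u^t\wedge v^t=\tfrac{i}{2}\,\omega_t\wedge\bar\omega_t ,
$$
so the $(0,2)$-components cancel and $dw^{2t}$ is purely of type $(1,1)$, hence in $I(\omega_1,\omega_3,\dots)$. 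This symmetrization is precisely where the Nijenhuis condition hides (compare $de^6=e^1\wedge e^4+e^2\wedge e^5$ in Example \ref{ex_{6,8}}, whose single summands are not closed under the ideal condition but whose sum is $(1,1)$); it must be computed, not inferred from a basis change.

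Once that computation is inserted, the rest of your argument goes through: the recursion $d\omega_{2k-1}=\pm i\sum_{t}\omega_{2(k-t)-1}\wedge w^{2t}$ is correct up to sign conventions, the ordering $\omega_1,\omega_3,\dots,\omega_{2K-1},\theta^1,\dots,\theta^d$ satisfies the nested condition, and your observation that \emph{any} almost complex structure $J_0$ on the even part $\mathfrak{w}$ will do --- so that the only obstruction is the parity of $\dim\mathfrak{w}$, even exactly for $n\equiv 0,1\pmod 4$ --- is a pleasant small generalization of the paper's specific choice $Jw_{4k+2}=w_{4k+4}$, and explains cleanly why the remaining residues are excluded.
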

\begin{proof}
Define an almost complex structure $J$ on $\mathfrak{D}(4m)$ by
\begin{equation}
\label{J-main}
Jv_{2l{+}1}=u_{2l{+}1},l=0,1,\dots, 2m{-}1, \;
Jw_{4k+2}=w_{4k+4}, \; k=0,1,\dots, m{-}1.
\end{equation}
The proof consists
of verifying the integrability condition (\ref{Nijenhuis}) for basic elements
$u_j, v_k, w_m$.

To define a complex structure on $\mathfrak{D}(4m{+}1)$ one should add to (\ref{J-main})
one relation:
\begin{equation}
\label{J_4m+1}
Jv_{4m{+}1}=u_{4m{+}1}, \; Ju_{4m{+}1}=-v_{4m{+}1}.
\end{equation}

Taking the duals $u^r, v^p, w^l$
one can define the following $1$-forms for $k=1,2,\dots, m{-}1$:
\begin{equation}
\begin{split}
\omega_{3k}=w^{4k{-}2}-iw^{4k}, \;\;\\
\omega_{3k{+}1}=u^{4k{+}1}+iv^{4k{+}1}, \\
\omega_{3k{+}2}=u^{4k{+}3}+iv^{4k{+}3},
\end{split}
\end{equation}
It follows from (\ref{slt}) that
\begin{equation}
du^r=\sum_{i{+}j=r}v^i{\wedge}w^j,\;
dv^p=\sum_{i{+}j=p}w^i{\wedge}u^j,\;
dw^l=\sum_{i{+}j=l}u^i{\wedge}v^j.
\end{equation}
One can easily write out the formulae for the
differentials $d\omega_i$:
\begin{equation}
\begin{split}
d\omega_1=0, \; d\omega_2=-\omega_1 {\wedge} iw^2, \;
d\omega_3=\frac{1}{2}(\omega_1 {\wedge} \bar \omega_2 -
 \bar \omega_1 {\wedge} \omega_2)
+ \frac{i}{2}\omega_1 {\wedge} \bar \omega_1,\\
d\omega_4=-\omega_1 {\wedge} iw^4
- \omega_2 {\wedge} iw^2, \;
d\omega_5=-\omega_1 {\wedge} iw^6
- \omega_2 {\wedge} i w^4 - \omega_4 {\wedge} iw^2,\\
d\omega_6=\frac{1}{2}(\omega_1 {\wedge} \bar \omega_5 {-}
 \bar \omega_1 {\wedge} \omega_5)+\frac{1}{2}
(\omega_2 {\wedge} \bar \omega_4 {-}  \bar \omega_2 {\wedge} \omega_4)+
\frac{i}{2}(\omega_1 {\wedge} \bar \omega_4 {-}
\bar \omega_1 {\wedge} \omega_4)
+ \frac{i}{2}\omega_2 {\wedge} \bar \omega_2,\\
\dots, \quad\quad\quad\quad\quad\quad\quad\quad\quad\quad\quad \dots
\end{split}
\end{equation}
\end{proof}

Remark that the algebras $\mathfrak{D}(4m{+}2)$ and $\mathfrak{D}(4m{+}3)$ are odd-dimensional ones.

\begin{proposition}
Graded Lie algebras $\mathfrak{D}(4m{+}2 \oplus {\mathbb R}$ and 
$\mathfrak{D}(4m{+}3 \oplus {\mathbb R}$ are $(6m{+}4)$-dimensional and $(6m{+}6)$-dimensional respectively and they admit complex structures defined by (\ref{J-main}) and (\ref{J_4m+1}) with
the additional relation
$$
Jw_{4k{+}2}=t, \; Jt=-w_{4k{+}2}.
$$
Obviously $s(\mathfrak{D}(4m{+}2)\oplus {\mathbb R})=4m{+}2$ and 
$s(\mathfrak{D}(4m{+}3)\oplus {\mathbb R})=4m{+}3$ because the new generator $t$ belongs to the centre of our algebra.
\end{proposition}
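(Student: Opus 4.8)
The statement comprises three independent claims---the dimensions, the existence of a complex structure, and the value of the nil-index---of which only the second needs real work, and even that largely reduces to the computation already carried out for $\mathfrak{D}(4m)$ and $\mathfrak{D}(4m{+}1)$. The dimensions are immediate: by the preceding proposition $\dim\mathfrak{D}(4m{+}2)=6m{+}3$ and $\dim\mathfrak{D}(4m{+}3)=6m{+}5$, so adjoining a one-dimensional summand gives $6m{+}4$ and $6m{+}6$, both even as required for an almost complex structure. For the nil-index: since $t$ spans an abelian ideal contained in the centre, $[\mathfrak{D}(n)\oplus\mathbb{R},\mathfrak{D}(n)\oplus\mathbb{R}]=[\mathfrak{D}(n),\mathfrak{D}(n)]$ and, by induction, $(\mathfrak{D}(n)\oplus\mathbb{R})^{k}=\mathfrak{D}(n)^{k}$ for every $k\ge 2$; as $\mathfrak{D}(n)$ is non-abelian this yields $s(\mathfrak{D}(n)\oplus\mathbb{R})=s(\mathfrak{D}(n))=n$, i.e. $4m{+}2$ and $4m{+}3$.

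The heart of the matter is integrability of the prescribed $J$. First, $J$ really is an almost complex structure: it splits the basis into two-element orbits, namely $\{v_j,u_j\}$ over all odd $j\le n$ (extending (\ref{J-main}) and (\ref{J_4m+1})), $\{w_{4k+2},w_{4k+4}\}$ for $k=0,\dots,m{-}1$, and the extra orbit $\{w_{4m+2},t\}$; these exhaust the basis, so $J^{2}={-}1$. To prove integrability I would pass to the dual picture and apply Salamon's basis criterion, choosing the basis of $(\mathfrak{g}^{\mathbb C})^{*}$ from the $(f,Jf)$-data of this very $J$, so that the ideal condition certifies integrability of $J$ itself. Concretely: on $\mathfrak{D}(4m{+}1)$ the previous proposition already exhibits holomorphic $1$-forms $\omega_1,\dots,\omega_{3m+1}$ with $d\omega_{l+1}\in I(\omega_1,\dots,\omega_l)$; I keep these same forms on $\mathfrak{D}(4m{+}2)\oplus\mathbb{R}$ and adjoin $\omega_{3m+2}=w^{4m+2}-it^{*}$, the form of the orbit $\{w_{4m+2},t\}$ (for $\mathfrak{D}(4m{+}3)\oplus\mathbb{R}$ one inserts in addition $\omega_{3m+3}=u^{4m+3}+iv^{4m+3}$). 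Two points then need checking: (i) the differentials $d\omega_1,\dots,d\omega_{3m+1}$ are unchanged by the enlargement, because the only new structure constants are those of brackets landing in the central top piece(s) of degree $\ge 4m{+}2$, which do not enter the differentials of forms supported in degrees $\le 4m{+}1$, while $dt^{*}=0$ since $t$ is central; (ii) $d\omega_{3m+2}=dw^{4m+2}=\sum_{i+j=4m+2}u^{i}\wedge v^{j}$ (odd $i,j$) lies in $I(\omega_1,\dots,\omega_{3m+1})$, and $d\omega_{3m+3}$ lies in $I(\omega_1,\dots,\omega_{3m+2})$.

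Step (ii) is the main obstacle, since $dw^{4m+2}$ written out in the real forms $u^i,v^j$ a priori contains $\bar\omega\wedge\bar\omega$ contributions and so is not visibly in an ideal generated by $\omega_1,\dots,\omega_{3m+1}$ alone. The remedy, exactly as in the computation of $d\omega_{3k}$ in the previous proposition, is to group the summand indexed by $(i,j)$ with the one indexed by $(j,i)$ and use $u^{i}\wedge v^{j}+u^{j}\wedge v^{i}=\tfrac{i}{2}(\omega_a\wedge\bar\omega_b-\bar\omega_a\wedge\omega_b)$ for $i\ne j$, together with $u^{i}\wedge v^{i}=\tfrac{i}{2}\,\omega_c\wedge\bar\omega_c$ for the middle index $i=j=2m{+}1$ (which is odd, so $u_{2m+1},v_{2m+1}$ are genuinely present); here $\omega_a,\omega_b,\omega_c$ are the forms among $\omega_1,\dots,\omega_{3m+1}$ with real parts $u^i,u^j,u^{2m+1}$, so every resulting summand carries a holomorphic factor of index $\le 3m{+}1$ and lies in the ideal. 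For $d\omega_{3m+3}$ a reindexing collapses the sum to ${-}i\sum\omega_{a(i)}\wedge w^{j}$ with $\omega_{a(i)}$ holomorphic of index $\le 3m{+}1$, so it lies in the ideal at sight. An entirely equivalent route is to verify (\ref{Nijenhuis}) directly on pairs of basis vectors, where every case not involving $t$ or $w_{4m+2}$ is one already settled for $\mathfrak{D}(4m)$ / $\mathfrak{D}(4m{+}1)$ and the remaining cases are trivial because $t$ and $w_{4m+2}$ are central.
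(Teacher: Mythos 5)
Your argument is correct and follows the same template the paper uses for the preceding proposition on $\mathfrak{D}(4m)$ and $\mathfrak{D}(4m{+}1)$; in fact the paper gives no proof of this statement beyond the parenthetical remark that $t$ is central, so your dual-coframe computation --- in particular the identity $u^{i}\wedge v^{j}+u^{j}\wedge v^{i}=\tfrac{i}{2}(\omega_a\wedge\bar\omega_b-\bar\omega_a\wedge\omega_b)$ used to put $dw^{4m+2}$ into the ideal --- supplies exactly the details the paper omits. The only point worth tightening is your closing aside: the Nijenhuis cases for odd-degree pairs $u_i,v_j$ with $i{+}j=4m{+}2$ are neither literally ``already settled in $\mathfrak{D}(4m{+}1)$'' (the bracket value $w_{4m+2}$ is new) nor ``trivial by centrality'' of $w_{4m+2}$, but they do reduce to the two identities $[Ju_i,Jv_j]=[u_i,v_j]$ and $[Ju_i,v_j]+[u_i,Jv_j]=0$, which hold by inspection of (\ref{slt}), so no actual gap results.
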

We proved the Theorem
\begin{theorem}
\label{main_theorem}
Let $s(2n)$ denotes the maximal value of nil-index $s(\mathfrak{g})$ of $2n$-dimensional nilpotent Lie algebra 
$\mathfrak{g}$ that admits a complex structure. Then we have the following estimates:
\begin{equation}
\label{main_estimate}
\left[\frac{4n}{3} \right] \le s(2n) \le 2n-2.
\end{equation}
\end{theorem}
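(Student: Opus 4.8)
The plan is to prove the two inequalities in \eqref{main_estimate} separately: the upper bound $s(2n)\le 2n-2$ from the structural constraint of Proposition \ref{ge5}, and the lower bound $\left[\frac{4n}{3}\right]\le s(2n)$ by producing, for every $n$, an explicit $2n$-dimensional nilpotent Lie algebra carrying a complex structure whose nil-index equals $\left[\frac{4n}{3}\right]$ exactly. The case $n=2$ is handled by hand: $\left[\frac{4n}{3}\right]=2$ and $s(4)=2$, realized by $\mathfrak{h}_3\oplus\mathbb{R}$, while the unique $4$-dimensional nilpotent Lie algebra of larger nil-index is the filiform $\mathfrak{m}_0(4)$, which carries no complex structure by \cite{GzR}. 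So assume $n\ge 3$.

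\emph{Upper bound.} Let $\mathfrak{g}$ be a $2n$-dimensional nilpotent Lie algebra, $2n\ge 6$, admitting an integrable complex structure; write $a_i=a_i(\mathfrak{g})=\dim\mathfrak{g}^i/\mathfrak{g}^{i+1}$ and $s=s(\mathfrak{g})$. We may assume $s\ge 3$, since otherwise $s\le 2\le 2n-2$. By Proposition \ref{ge5}, estimate \eqref{no_filiform} gives $a_1+a_2+a_3=\dim\mathfrak{g}-\dim\mathfrak{g}^4\ge 5$, while $a_i\ge 1$ for $i=4,\dots,s$. Summing the telescoping decomposition of $\dim\mathfrak{g}$,
$$
2n=\sum_{i=1}^{s}a_i=(a_1+a_2+a_3)+\sum_{i=4}^{s}a_i\ge 5+(s-3)=s+2,
$$
so $s\le 2n-2$, whence $s(2n)\le 2n-2$.

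\emph{Lower bound.} It suffices to exhibit one suitable example in each even dimension, and the construction depends only on the residue of $2n$ modulo $6$, since every even integer $\ge 4$ is congruent to $0$, $2$, or $4$ mod $6$. When $2n=6m$, take $\mathfrak{D}(4m)$: by the dimension and nil-index formulas it is $2n$-dimensional with $s=4m=\frac{2}{3}\cdot 6m=\left[\frac{4n}{3}\right]$. When $2n=6m+2$, take $\mathfrak{D}(4m+1)$: it is $2n$-dimensional with $s=4m+1=\left[\frac{2}{3}(6m+2)\right]=\left[\frac{4n}{3}\right]$. When $2n=6m+4$, take $\mathfrak{D}(4m+2)\oplus\mathbb{R}$: it is $(6m+4)$-dimensional with $s=4m+2=\left[\frac{2}{3}(6m+4)\right]=\left[\frac{4n}{3}\right]$, the extra generator being central so the nil-index is unchanged. (For the smallest dimensions one may equally use $\mathfrak{h}_3\oplus\mathbb{R}$ and $\mathfrak{D}(4)\cong\mathfrak{g}_{6,8}$.) Each of these algebras admits a complex structure by the Propositions of the previous section, which settles $s(2n)\ge\left[\frac{4n}{3}\right]$.

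Essentially all the substance is already in place: Proposition \ref{ge5} (which rests on Lemma \ref{mainlemma} and the $2$-cohomology argument) is the only nontrivial input to the upper bound — it is precisely what upgrades the trivial $s\le 2n-1$ to $s\le 2n-2$ — and the integrability of the complex structures on $\mathfrak{D}(4m)$, $\mathfrak{D}(4m+1)$, and $\mathfrak{D}(4m+2)\oplus\mathbb{R}$ was verified when those examples were introduced. The only point requiring care is the bookkeeping: one must check that the three residue classes modulo $6$ genuinely exhaust the even dimensions and that in each class the nil-index meets $\left[\frac{4n}{3}\right]$ exactly rather than merely from below, i.e. that the calibration $\dim\mathfrak{D}(n)=\left[\frac{3}{2}s(\mathfrak{D}(n))\right]$ (with the abelian $\mathbb{R}$-summand correction in the $\equiv 4$ case) loses no slack. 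This is where I expect the minor friction to lie, but it is a matter of checking the four explicit formulas for $\dim\mathfrak{D}(n)$ against $s(\mathfrak{D}(n))=n$.
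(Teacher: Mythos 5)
Your proof is correct and follows essentially the same route as the paper: the upper bound $s\le 2n-2$ is exactly the intended consequence of Proposition \ref{ge5} via the telescoping sum $2n=\sum a_i\ge 5+(s-3)$, and the lower bound is realized by the same examples $\mathfrak{D}(4m)$, $\mathfrak{D}(4m{+}1)$ and $\mathfrak{D}(4m{+}2)\oplus\mathbb{R}$ covering the residues $0,2,4$ of $2n$ modulo $6$. Your explicit treatment of the case $2n=4$ and your verification that the nil-indices meet $\left[\frac{4n}{3}\right]$ exactly are welcome details the paper leaves implicit.
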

\begin{remark}
It follows from \cite{Sal} that $s(6)=4$. But it appears possible to improve the estimates (\ref{main_estimate}) for dimensions $2n \ge 8$.
\end{remark}

\bibliographystyle{amsalpha}

\begin{thebibliography}{A}

%\bibitem{ASal}
%A.~Andrada, S.M.~Salamon,
%{\it Complex product structures on Lie algebras},
%Forum Math., {\bf 17} (2005), 261--295.

\bibitem{BD1}
M.~Barberis, I.~Dotti,
{\it Abelian complex structures on solvable Lie algebras},
J. of Lie Theory, {\bf 14}:1 (2004), 25--34.

\bibitem{BD2}
M.~Barberis, I.~Dotti,
{\it Complex structures on affine motion groups},
Quart. J. Math., {\bf 55}:4 (2004), 375--389.

\bibitem{BG}
C.~Benson, C.~Gordon,
{\it K\"ahler and symplectic structures on nilmanifolds},
Topology, {\bf 27}:4 (1988), 513--518.

\bibitem{CF}
S.~Console, A.~Fino,
{\it Dolbeault cohomology of compact nilmanifolds},
Transform. Groups, {\bf 6} (2001), 111--124.

\bibitem{CFGU}
L.A.~Cordero, M.~Fernandez, A.~Gray, L.~Ugarte,
{\it Nilpotent complex structures on compact nilmanifolds},
Rend.Circolo Mat.Palermo {\bf 49} suppl. (1997), 83--100.

\bibitem{DF1}
I.~Dotti, A.~Fino,
{\it Hypercomplex eight-dimensional nilpotent Lie groups},
J. of Pure and Appl. Algebra {\bf 184} (2003), 47--57.

\bibitem{DF2}
I.~Dotti, A.~Fino,
{\it Hypercomplex nilpotent Lie groups},
in "Global Differential geometry: The Mathematical Legacy
of Alfred Gray (Bilbao, 2000)", 310--314, Contemp. Math., {\bf 288},
AMS, Providence, RI, 2001.

\bibitem{GzR}
M.~Goze, E.~Remm,
{\it Non existence of complex structures on filiform Lie algebras},
Commun. Algebra {\bf 30}:8 (2002), 3777-3788.


\bibitem{Has}
K. Hasegawa,
{\it Minimal models of nilmanifolds},
Proc. Amer. Math. Soc. {\bf 106}:1 (1989), 65--71.



\bibitem{Moros}
V.~Morosov, {\it Classification of nilpotent Lie algebras of order $6$}, Izv. Vyssh. Uchebn. Zaved. Mat.
{\bf 4} (1958), 161–171.

\bibitem{NN}
A.~Newlander, L.~Nirenberg,
{\it Complex analytic coordinates in almost complex manifolds},
Annals Math. {\bf 65} (1957), 391--404.

%\bibitem{Nz}
%K.~Nomizu,
%{\it On the cohomology of homogeneous spaces of nilpotent Lie groups},
%Ann. of Math. {\bf 59} (1954), 531--538.

\bibitem{Sal}
S.M.~Salamon,
{\it Complex Structures on Nilpotent Lie Algebras},
J.Pure Appl.Algebra, {\bf 157} (2001), 311--333.

\bibitem{Sal2}
S.M.~Salamon,
{\it Almost parallel structures},
in "Global Differential geometry: The Mathematical Legacy
of Alfred Gray (Bilbao, 2000)", 162--181, Contemp. Math., {\bf 288},
AMS, Providence, RI, 2001.

\bibitem{V}
M.~Vergne,
{\it Cohomologie des alg\`ebres de Lie nilpotentes},
Bull. Soc. Math. France {\bf 98} (1970), 81--116.

\end{thebibliography}

\end{document}